\newtheorem{theorem}{Theorem}[section]
\newtheorem{proposition}[theorem]{Proposition}
\newtheorem{corollary}[theorem]{Corollary}
\newtheorem{lemma}[theorem]{Lemma}
\newtheorem*{claim*}{Claim}
\theoremstyle{definition}
\newtheorem{remark}[theorem]{Remark}
\newtheorem*{remark*}{Remark}
\newcommand\ds{\displaystyle}
\newcommand\Pb{\mathbb{P}}  
\newcommand\Nb{\mathbb{N}}
\newcommand\Gb{\mathbb{G}}
\newcommand\Gc{\mathcal{G}}
\newcommand\Oc{\mathcal{O}}  
\newcommand\Zc{\mathcal{Z}}
\newcommand\yt{\widetilde{y}}
\newcommand\dhat{\hat{d}}
\newcommand\comp{\mathrm{c}}
\newcommand\al{\alpha}
\newcommand\be{\beta}
\newcommand\ga{\gamma}
\newcommand\Ga{\Gamma}
\newcommand\de{\delta}
\newcommand\eps{\varepsilon}
\newcommand\sm{\setminus}
\newcommand{\defeq}{\mathrel{\vcenter{\baselineskip0.5ex \lineskiplimit0pt
                     \hbox{\scriptsize.}\hbox{\scriptsize.}}}%
                     =}
\DeclareFontFamily{U}{matha}{\hyphenchar\font45}
\DeclareFontShape{U}{matha}{m}{n}{
  <-6> matha5 <6-7> matha6 <7-8> matha7
  <8-9> matha8 <9-10> matha9
  <10-12> matha10 <12-> matha12
  }{}
\DeclareSymbolFont{matha}{U}{matha}{m}{n}
\DeclareMathSymbol{\Lt}{3}{matha}{"CE}
\title{Star decompositions via orientations}
\author{Viktor Harangi}
\address{HUN-REN Alfr\'ed R\'enyi Institute of Mathematics, Budapest, Hungary}
\email{harangi@renyi.hu}
\thanks{The author was supported by the MTA-R\'enyi Counting in Sparse Graphs ``Momentum'' Research Group, by NRDI 
(grant KKP 138270), and by the Hungarian Academy of Sciences (J\'anos Bolyai Scholarship).}
\begin{document}

\begin{abstract}
A $k$-star decomposition of a graph is a partition of its edges into $k$-stars (i.e., $k$ edges with a common vertex). The paper studies the following problem: given $k \leq d/2$, does the random $d$-regular graph have a $k$-star decomposition (asymptotically almost surely, provided that the number of edges is divisible by $k$)? Delcourt, Greenhill, Isaev, Lidick\'y, and Postle proved the a.a.s.~existence for every odd $k$ using earlier results regarding orientations satisfying certain degree conditions modulo $k$. 

In this paper we give a direct, self-contained proof that works for every $d$ and every $k<d/2-1$. In fact, we prove stronger results. Let $s\geq 1$ denote the integer part of $d/(2k)$. We show that the random $d$-regular graph a.a.s.~has a $k$-star decomposition such that the number of stars centered at each vertex is either $s$ or $s+1$. Moreover, if $k < d/3$ or $k \leq d/2 - 2.6 \log d$, we can even prescribe the set of vertices with $s$ stars, as long as it is of the appropriate size. 
\end{abstract}


\maketitle

\section{Introduction} \label{sec:intro}

For a positive integer $d \geq 3$, let $\Gc_{N,d}$ denote the $N$-vertex random $d$-regular graph, that is, a uniform random graph among all simple $d$-regular graphs on the vertex set $\{1,\ldots, N\}$. We say that $\Gc_{N,d}$ \emph{asymptotically almost surely} (a.a.s.~in short) has a property if the probability that $\Gc_{N,d}$ has this property converges to $1$ as $N \to \infty$. 

Given an integer $k \geq 2$, it is natural to ask whether the edges of $\Gc_{N,d}$ can be partitioned into edge-disjoint stars, each containing $k$ edges. Here we need to restrict ourselves to those $N$ for which the number of edges ($Nd/2$) is divisible by $k$. If such a partition exists with probability $1-o_N(1)$, then we say that $\Gc_{N,d}$ a.a.s.~has a $k$-star decomposition.

This problem behaves very differently in the regimes $k \leq d/2$ and $k>d/2$. In the former case the answer is expected to be positive for every $d,k$, while the latter case is closely related to the well-studied and notoriously difficult problem of accurately determining the independence ratio of random regular graphs.

The study of this problem was initiated in \cite{delcourt2018random}, where the case $d=4, k=3$ was answered affirmatively using second moment calculations. That result was extended recently in \cite{delcourt2023decomposing} where it was shown that the answer is positive whenever $\frac{d}{2}<k<\frac{d}{2}+\frac16\log d$.

In \cite{harangi2025star1} the current author considered the regime $k>d/2$ and proved a.a.s.~existence for $\frac{d}{2}<k<\frac{d}{2}+\big( 1+o_d(1) \big)\log d$, which is asymptotically sharp as $d \to \infty$.

This paper focuses on the regime $2 \leq k \leq d/2$. As we mentioned, the answer is expected to be positive for all $d,k$ in this case. In fact, \cite[Theorem 1.1]{delcourt2018random} claimed---incorrectly---that this follows from a result in \cite{lovaszml2013mod_orientation}. This was later clarified in \cite{delcourt2023decomposing}, where the case of odd $k \leq d/2$ was rigorously deduced from a result of \cite{lovaszml2013mod_orientation} regarding modulo $k$-orientations. 

Note that the problem is very simple when $2k$ divides $d$: any $d$-regular graph has a $k$-star decomposition in this case. Indeed, as it was pointed out in \cite{delcourt2023decomposing}, one can take an Eulerian cycle of $G$ (i.e., a closed walk using every edge exactly once) and direct the edges in the same direction along this walk. The resulting orientation has the property that each in-degree and each out-degree is equal to $d/2$, which is a multiple of $k$ in this case, so one can partition the outgoing edges of each vertex into $k$-stars. We obtain a star decomposition with the property that there are exactly $\sigma \defeq \frac{d}{2k}$ stars centered at each vertex.

What if $\sigma$ is not an integer? The next best thing would be a decomposition with $s$ or $s+1$ stars centered at the vertices, where $s \defeq \lfloor \sigma \rfloor$ is the integer part of $d/(2k)$. In this paper we prove the a.a.s.~existence of such decompositions. 
\begin{theorem} \label{thm:main}
Suppose that $2 \leq k < d/2-1$. Let 
\begin{equation} \label{eq:s&al}
s \defeq \left\lfloor \frac{d}{2k} \right\rfloor ; \quad 
\be \defeq \left\{ \frac{d}{2k} \right\} = \frac{d}{2k} - s .
\end{equation}
Then $\Gc_{N,d}$ a.a.s.~has a $k$-star decomposition with $s$ or $s+1$ stars centered at the vertices. In fact, for any given set $A \subset \{1,\ldots,N\}$ with $|A|=\be N$, it has probability $1-o_N(1)$ that $\Gc_{N,d}$ has a $k$-star decomposition with exactly $s$ stars centered at every $v \notin A$ and exactly $s+1$ stars centered at every $v \in A$.

Moreover, if $k<d/3$ or $k<d/2-2.6 \log d$, then we can arbitrarily prescribe which vertices should have $s$ stars: it holds a.a.s.~for $\Gc_{N,d}$ that the graph has a $k$-star decomposition with the above property for \textbf{every} $A$ with $|A|=\be N$. 
\end{theorem}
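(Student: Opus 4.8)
The plan is to pass to orientations. A $k$-star decomposition in which $s+1$ stars are centred at each $v\in A$ and $s$ at each $v\notin A$ is exactly an orientation of $\Gc_{N,d}$ in which each vertex $v$ has out-degree $a_v:=sk+k\cdot\ind[v\in A]$: given the orientation one partitions the out-edges at $v$ into $k$-stars, and conversely one orients every edge away from the centre of its star. Since $|A|=\be N$ we get $\sum_v a_v=skN+k\be N=\tfrac d2N=|E(\Gc_{N,d})|$, so by Hakimi's theorem such an orientation exists if and only if
\begin{equation*}
e(S)\ \le\ \sum_{v\in S}a_v\ =\ sk\,|S|+k\,|S\cap A|\qquad\text{for every }S\subseteq\{1,\dots,N\}.\tag{$\star_A$}
\end{equation*}
Reversing all edges turns an orientation with out-degrees $a_v$ into one with out-degrees $d-a_v$, so $(\star_A)$ is equivalent to its dual $e(S)\le(d-sk)|S|-k|S\cap A|$ for all $S$. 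Hence the theorem reduces to showing that $(\star_A)$ holds a.a.s.\ for a prescribed $A$, and, for the last claim, that a.a.s.\ $(\star_A)$ holds for every admissible $A$ simultaneously — minimising the right-hand side over $A$, the latter amounts to $e(S)\le sk\,|S|$ for all $|S|\le(1-\be)N$, together with its dual for larger $S$.

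Work in the configuration model (legitimate since $\Pr[\Gc_{N,d}\text{ simple}]=\Theta(1)$) and verify $(\star_A)$ by a union bound over $S$, split by $m:=|S|$. For $m\le 2k+1$ it is deterministic: $e(S)\le\binom m2\le km\le sk\,|S|$. For $2k+1<m\le\eps N$, with $\eps=\eps(d,k)>0$ small, use $\Pr[e(S)\ge t]\le\binom{md}{2t}(2t-1)!!\prod_{i=0}^{t-1}(Nd-2i-1)^{-1}$; taking $t=sk\,m+1$ the exponent $sk\,m+1$, which exceeds $m$ because $sk\ge 2$, beats $\binom Nm$ (and one extra factor $\tfrac{m}{N}$ makes the sum over $m$ actually $o(1)$). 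The slack $k\,|S\cap A|$ is only helpful here; the worst case is $S\cap A=\varnothing$. For $m>(1-\eps)N$ apply the same estimate to $\bar S$ through the dual form, with multiplier $d-(s+1)k$ in place of $sk$: this is $\ge 2$ because the hypothesis $k<d/2-1$ forces $d>2k+2$. So only the range $\eps N\le m\le(1-\eps)N$ remains.

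For linear-sized $S$, two observations cut the work down. First, $e(S)\le\tfrac d2|S|$ always, so $(\star_A)$ can fail only when $|S\cap A|<\be|S|$. Second, if $(\star_A)$ fails then a minimiser $S^{*}$ of $\Phi(S):=\sum_{v\in S}a_v-e(S)$ satisfies $d_{S^{*}}(v)\ge a_v\ge sk$ for $v\in S^{*}$ and $d_{S^{*}}(v)\le a_v$ for $v\notin S^{*}$; thus $\Gc_{N,d}[S^{*}]$ has minimum degree $\ge sk$ and, dually, $\Gc_{N,d}[\overline{S^{*}}]$ has minimum degree $\ge d-(s+1)k$. So it suffices to rule out a.a.s.\ any set $S$ with $\eps N\le|S|\le(1-\eps)N$, minimum internal degree $\ge sk$, and $e(S)\ge sk|S|+k|S\cap A|+1$. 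Writing $|S|=\alpha N$, $|S\cap A|=\gamma\alpha N$ with $\gamma<\be$, this is a first-moment computation whose exponent — computed with the \emph{sharp} binomial $\binom{md}{2t}$ rather than the crude $(md)^{2t}/(2t)!$ — is an explicit entropy-type function of $(\alpha,\gamma)$; the bulk of the work is to check it is negative throughout the relevant region. A point worth stressing is that for a \emph{fixed} $A$ the dangerous sets (those with $|S\cap A|$ small) essentially sit inside the $(1-\be)N$-set $\{1,\dots,N\}\sm A$, which caps the entropic cost of the union bound — this is what makes the fixed-$A$ case go through for all $d,k$ with $k<d/2-1$.

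I expect this last estimate to be the crux, and the reason the ``for every $A$'' statement needs $k<d/3$ or $k\le d/2-2.6\log d$. Quantifying over all $A$ removes the entropy cap: the requirement becomes the sharp ``no dense linear subgraph'' statement $e(S)\le sk|S|$ for all $|S|\le(1-\be)N$, and since $\tfrac d2-sk=\be k$ the ratio of $\mathbb E\,e(S)$ to the threshold is $\alpha(1+\be/s)\le(1-\be)(1+\be/s)<1$, but the separation from $1$ degrades as $\be k$ shrinks. A definite separation — exactly what $k<d/3$ or $k\le d/2-2.6\log d$ provides, by keeping $\be k$ (equivalently $\tfrac d2-sk$) large enough relative to the binary entropy — is what keeps the first-moment exponent negative; when the room is tighter the first moment can even diverge and one must fall back on a second-moment (contiguity/planting) refinement showing the over-count is exponential. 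The write-up would therefore carry out the $(\alpha,\gamma)$-optimisation explicitly, isolate $\be k$ as the quantity that must be sufficiently large, and dispose of the endpoint ranges $|S|\le\eps N$ and $|S|\ge(1-\eps)N$ by the elementary bounds above.
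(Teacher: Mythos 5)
Your framework coincides with the paper's: reduction to an orientation with prescribed out-degrees (the paper uses Frank's theorem; you cite Hakimi --- the cut condition is the same), verification of the condition $e(S)\le sk|S|+k|S\cap A|$ via a first moment in the configuration model, and a split by $|S|$ with the endpoint regimes $|S|\le\eps N$ and $|S|\ge(1-\eps)N$ dispatched by a cruder bound (exactly what Lemma~3.2 of the paper does). Your two observations for linear $S$ --- that a failure requires $|S\cap A|<\be|S|$, and that a minimiser of $\Phi$ has minimum internal degree at least $sk$ --- are both correct; the second is a refinement the paper does not need, but it is sound, and your reduction of the ``every $A$'' statement to the two conditions $e(S)\le sk|S|$ for $|S|\le(1-\be)N$ and its complementary dual matches Theorem~4.1.

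The genuine gap is the first-moment estimate for linear $S$. You correctly identify that the exponent is an entropy-type function of $(\alpha,\gamma)$ whose negativity must be checked, but you defer that check, and that check \emph{is} the theorem: essentially all of the paper's technical content lives there (Lemma~4.2 and Section~5.4, with sharp expansions of $F(x,t)$ near the origin, an explicit maximisation of $\eta(x_1,x_2)$ along each slice $x_1+x_2=x$ by bounding $\partial_t F$ and solving a quadratic for the critical point, and separate treatment of the two rays $x_1=0$ and $x_2=\alpha_2$). The ``entropy cap'' heuristic for fixed $A$ is the right intuition, but it does not by itself yield negativity down to $k<d/2-1$ (equivalently $r\ge3$), nor explain why the argument genuinely breaks at $r\in\{1,2\}$ --- so the estimate must be carried out sharply rather than waved through. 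One concrete misstep you should drop: the suggested second-moment fallback. The paper is first-moment throughout; when the strong condition fails, the fixed-$A$ case is still handled by the sharper, profile-dependent first moment of Lemma~4.2 rather than by contiguity or planting, and if the relevant first-moment exponent were actually positive there would be no ground to expect the statement to hold.
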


As we mentioned, the case $k=d/2$ is easy so for each $d$ there is only one missing case in the regime $k \leq d/2$, namely when $k=\left\lfloor\frac{d-1}{2}\right\rfloor$. 

\subsection*{Proof method}
Using a general result regarding orientations with degree bounds at the vertices, we give a necessary and sufficient condition for a graph $G$ to have a $k$-star decomposition with a prescribed number of stars at each vertex (see Lemma~\ref{lem:gen_sd}). The condition involves bounds on the edge counts of induced subgraphs of $G$. Then we use the first-moment method/counting arguments in the configuration model to prove that random regular graphs satisfy these conditions with high probability.

\subsection*{Notations}
As usual, $V(G)$ denotes the vertex set of a graph $G$, and we write $\deg_G(v)$ or simply $\deg(v)$ for the degree of a vertex $v$, while $G[U]$ stands for the induced subgraph on $U \subseteq V(G)$. By \emph{density} we always refer to the relative size $|U| \big/ |V(G)|$ of a subset $U$. Furthermore, when $G$ is clear from the context, we use the following shorthand notations.
\begin{itemize}
\item Complement: $U^\comp \defeq V(G) \sm U$.
\item Edge count: $e(G)$ denotes the total number of edges, while $e[U] \defeq e\big( G[U] \big)$ is the number of edges inside $U$. Also, for disjoint subsets $U,U' \subseteq V(G)$ we write $e[U,U']$ for the number of edges between $U$ and $U'$. Finally, let $e[v,U] \defeq e[\{v\},U]$.
\end{itemize}
Throughout the paper, the function $h$ stands for $h(x)=-x \log x$, and we write $H(x)$ for $h(x)+h(1-x)$. Also, we use $\sqcup$ for the union of disjoint sets.

\subsection*{Organization of the paper}
In Section~\ref{sec:conditions} we give a general condition that guarantees the existence of a star decomposition in a deterministic (non-random) graph. Section~\ref{sec:subgraphs} contains results about the number of edges of induced subgraphs of random regular graphs. Section~\ref{sec:sd_in_rrg} combines the results of the two previous sections to prove that random regular graphs a.a.s.~have star decompositions. To make the paper as reader-friendly as possible, we moved all the technical computations to Section~\ref{sec:technical}.

\section{Conditions via orientations} \label{sec:conditions}

We start with a general lemma regarding star decompositions in (deterministic) regular graphs.
\begin{lemma} \label{lem:gen_sd}
Let $G$ be a $d$-regular graph with $N$ vertices, and let $k \geq 2$. Suppose that we have a fixed partition of the vertex set:
\[ V(G)= \bigsqcup_{j \geq 0} A_j 
\quad \text{in such a way that } \quad 
\sum_{j \geq 0} j |A_j| = \frac{Nd}{2k} . \]
For a subset $U \subseteq V(G)$ we define 
\[ U_j \defeq U \cap A_j 
\quad \text{and} \quad U'_j \defeq U^\comp \cap A_j .\]
Then the following are equivalent:
\begin{enumerate}[(i)]
\item $G$ has a $k$-star decomposition with exactly $j$ stars centered at every $v \in A_j$ for each $j$;
\item $G$ has an orientation such that for each $j$ and for any $v \in A_j$ the out-degree of $v$ is $jk$;
\item for any $U \subseteq V(G)$ we have 
\begin{equation} \label{eq:cond_U}
e[U] \leq \sum_{j \geq 0} jk |U_j| .
\end{equation}
Furthermore, for any given $U$, \eqref{eq:cond_U} is equivalent to 
\begin{equation} \label{eq:cond_Uc}
e[U^\comp] \leq  \sum_{j \geq 0} (d-jk) |U'_j| 
= d |U^\comp| - \sum_{j \geq 0} jk |U'_j|.
\end{equation}
\end{enumerate}
\end{lemma}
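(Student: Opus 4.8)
The plan is to establish the cycle of implications (i) $\Rightarrow$ (ii) $\Rightarrow$ (iii) $\Rightarrow$ (i), together with the auxiliary equivalence of \eqref{eq:cond_U} and \eqref{eq:cond_Uc}. The last equivalence is the quickest: summing the degree identity $\sum_{v} \deg(v) = 2e(G)$ restricted appropriately, one has $e[U] + e[U^\comp] + e[U,U^\comp] = Nd/2$ and $2e[U] + e[U,U^\comp] = \sum_j d|U_j|$ (since $G$ is $d$-regular), hence $e[U] - e[U^\comp] = \sum_j d|U_j| - Nd/2 + \text{(const)}$; more directly, $e[U^\comp] = e(G) - e[U] - e[U,U^\comp]$ and $d|U^\comp| = 2e[U^\comp] + e[U,U^\comp]$, so $e[U^\comp] - (d|U^\comp| - e[U^\comp]) $ rearranges to show that \eqref{eq:cond_U} for $U$ and \eqref{eq:cond_Uc} for the same $U$ differ by the global constraint $\sum_j j|A_j| = Nd/(2k)$, i.e. $\sum_j jk(|U_j|+|U'_j|) = Nd/2 = e(G)$. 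I would write this out as a short computation using $e(G) = e[U] + e[U^\comp] + e[U,U^\comp]$ and the handshake identity for $G[U]$.

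For (i) $\Leftrightarrow$ (ii): given a $k$-star decomposition with $j$ stars at each $v \in A_j$, orient every edge of each star away from its center; since the stars are edge-disjoint and cover $E(G)$, every edge gets exactly one orientation, and the out-degree of $v \in A_j$ is exactly $jk$ (the $j$ stars centered there contribute $k$ out-edges each, and no other star is centered at $v$). Conversely, given an orientation with out-degree $jk$ at each $v \in A_j$, partition the $jk$ out-edges at each such vertex arbitrarily into $j$ groups of $k$; each group is a $k$-star centered at $v$, and these stars over all vertices are edge-disjoint (each edge is the out-edge of a unique endpoint) and cover $E(G)$.

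The heart of the argument is (ii) $\Leftrightarrow$ (iii), which is where I expect the main work to lie: this should follow from a Hall-type / max-flow–min-cut or Hakimi-style orientation theorem. The natural tool is the classical result (attributed to Hakimi, or deducible from the Gale–Ryser/flow framework) that a graph $G$ has an orientation with $\mathrm{outdeg}(v) = f(v)$ for a prescribed function $f : V(G) \to \Zb_{\geq 0}$ with $\sum_v f(v) = e(G)$ if and only if $e[U] \leq \sum_{v \in U} f(v)$ for every $U \subseteq V(G)$. Applying this with $f(v) = jk$ for $v \in A_j$ — for which $\sum_v f(v) = \sum_j jk|A_j| = k \cdot Nd/(2k) = Nd/2 = e(G)$, exactly the hypothesis — turns the orientation condition (ii) into precisely the family of inequalities \eqref{eq:cond_U}, proving (ii) $\Leftrightarrow$ (iii). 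The necessity direction is immediate (the $e[U]$ edges inside $U$ must all be oriented within $U$, each contributing to the out-degree of a vertex of $U$); the sufficiency is the substantive half and is exactly the cited orientation theorem. The plan is therefore to state this orientation lemma with a reference (and, if self-containedness is desired, sketch its one-line reduction to an integral max-flow in the bipartite edges-vs-vertices network, where the min-cut condition reads $e[U] \leq \sum_{v\in U} f(v)$), and then observe that (iii) as stated — quantified over all $U$ — is literally this condition.
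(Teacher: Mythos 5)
Your proposal is correct and follows essentially the same route as the paper: (i) $\Leftrightarrow$ (ii) is the same trivial star/orientation translation, (ii) $\Leftrightarrow$ (iii) is reduced to a classical orientation theorem (you cite the exact out-degree version, usually attributed to Hakimi; the paper cites Frank's version with out-degree upper bounds and then upgrades to equality using $\sum_j j|A_j| = Nd/(2k)$ — both are immediate consequences of max-flow–min-cut), and the equivalence of \eqref{eq:cond_U} and \eqref{eq:cond_Uc} is the same $d$-regularity bookkeeping. The one small caveat: your inline sketch of the \eqref{eq:cond_U} $\Leftrightarrow$ \eqref{eq:cond_Uc} computation is garbled as written (the expression $e[U^\comp] - (d|U^\comp| - e[U^\comp])$ is not the right combination); the clean way, as in the paper, is to subtract $2e[U] + e[U,U^\comp] = d|U|$ from $2e[U^\comp] + e[U,U^\comp] = d|U^\comp|$ to get $e[U^\comp] - e[U] = \tfrac{d}{2}(|U^\comp| - |U|)$, and then check that the difference of the two slacks equals $k\sum_j j|A_j| - Nd/2$, a constant independent of $U$.
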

\begin{remark}
Consider the following variant of (ii):
\begin{enumerate}
\item[(ii')] $G$ has an orientation such that for each $j$ and for any $v \in A_j$ the out-degree of $v$ is \textbf{at most} $jk$;
\end{enumerate}
If we only assume $\sum_j j |A_j| \geq Nd/(2k)$, then we still have 
\[ \text{(ii')} 
\Leftrightarrow \text{(iii)} .\] 
Also, \eqref{eq:cond_Uc} still implies \eqref{eq:cond_U} for any given $U$ (although they are not equivalent any more).
\end{remark}
\begin{proof}
(i) $\Leftrightarrow$ (ii) is trivial, while (ii') $\Leftrightarrow$ (iii) is an immediate consequence of \cite[Theorem 1]{frank1976orien}, where orientations with general degree bounds were studied.\footnote{Here we do not rely on anything particularly deep. One could quickly deduce the cited result about orientations from (the integral version of) the max flow min cut theorem.} Furthermore, we clearly have (ii) $\Leftrightarrow$ (ii') under the assumption $\sum_j j |A_j| = Nd/(2k)$.

As for the connection between \eqref{eq:cond_U} and \eqref{eq:cond_Uc} for a given $U$, note that in any $d$-regular graph we have 
\[ 2e[U] + e[U,U^\comp] = d |U| 
\quad \text{and} \quad 
2e[U^\comp] + e[U,U^\comp] = d |U^\comp| .\]
The difference of these equations gives 
\[ e[U^\comp]-e[U] = \frac{d}{2}|U^\comp|-\frac{d}{2}|U| .\]
Using this, as well as $|U_j|+|U'_j|=|A_j|$ and $|U|+|U^\comp|=N$, we get 
\begin{multline*}
e[U^\comp] 
- \left( d |U^\comp| - \sum_{j \geq 0} jk |U'_j| \right) 
- \left( e[U] - \sum_{j \geq 0} jk |U_j| \right) \\
= \frac{d}{2}|U^\comp|-\frac{d}{2}|U| - d|U^\comp| 
+ \sum_{j \geq 0} jk |A_j| 
= k \sum_{j \geq 0} j |A_j| - \frac{Nd}{2} .
\end{multline*}
It follows that \eqref{eq:cond_Uc} $\Rightarrow$ \eqref{eq:cond_U} under $\sum_j j |A_j| \geq Nd/(2k)$, and \eqref{eq:cond_Uc} $\Leftrightarrow$ \eqref{eq:cond_U} under the stronger assumption $\sum_j j |A_j| = Nd/(2k)$. The proof of the lemma and the remark is complete.
\end{proof}
%

Given this result, the following strategy naturally arises for proving the existence of star decompositions in random regular graphs. Suppose that $\al_j \geq 0$ are rational numbers such that 
\begin{equation} \label{eq:al_j_cond}
\sum_{j\geq 0} \al_j = 1 
\quad \text{and} \quad 
\sum_{j\geq 0} j\al_j = \frac{d}{2k} ,
\end{equation}
and take $N \in \Nb$ such that $a_j \defeq N\al_j$ are integers. Furthermore, fix a partition 
\[ \{1,\ldots,N\} = \bigsqcup_{j \geq 0} A_j 
\quad \text{with} \quad 
|A_j|=a_j = N \al_j .\]
Then one can compute the probability that the random graph $\Gc_{N,d}$ satisfies \eqref{eq:cond_U} for some $U \subseteq \{1,\ldots,N\}$ in terms of the sizes $|U'_j|=|U \cap A_j|$. The question is whether we can choose $\al_j$ in such a way that these computations would ensure that \eqref{eq:cond_U} is satisfied for all $U$ with high probability?

Note that if $2k$ divides $d$, then we can simply set 
\[ \al_j = 
\begin{cases}
1 & j=\frac{d}{2k} ;\\
0 & \text{otherwise.}
\end{cases} \]
Then condition \eqref{eq:cond_U} is simply $e[U] \leq \frac{d}{2}|U|$, which is clearly true for all $U$ in any $d$-regular graph. We conclude that every $d$-regular graph has a star decomposition with exactly $d/(2k)$ stars centered at each vertex. This fact was already pointed out in \cite{delcourt2023decomposing} (see the introduction).

From this point on we will assume that $2k$ does not divide $d$ and we set 
\[ s \defeq \left\lfloor \frac{d}{2k} \right\rfloor 
\quad \text{and} \quad r \defeq d-2sk .\]
(Note that $1 \leq r \leq 2k-1$ and $r$ has the same parity as $d$.) This time we need that $\al_j\neq 0$ for at least two indices $j$, otherwise \eqref{eq:al_j_cond} cannot hold. If only $\al_s$ and $\al_{s+1}$ are nonzero, then \eqref{eq:al_j_cond} yields the following values:
\[ \al_j = 
\begin{cases}
\frac{2k-r}{2k} & j=s ;\\
\frac{r}{2k} & j =s+1 ;\\
0 & \text{otherwise.}
\end{cases} \]
Theorem~\ref{thm:main} claims the existence of star decompositions for this specific choice.

\section{Subgraphs with prescribed edge-densities} \label{sec:subgraphs}

In this paper we write $\Gc_{N,d}$ for a random $d$-regular graph on $N$ vertices, that is, a uniform random graph among all $d$-regular simple graphs on the vertex set $\{1,\ldots,N\}$. 

There is a closely related random graph model, obtained via the so-called \emph{configuration model}, that we denote by $\Gb_{N,d}$. Given $N$ vertices, each with $d$ ``half-edges'', the configuration model picks a random pairing of these $Nd$ half-edges, producing $Nd/2$ edges. The resulting random graph $\Gb_{N,d}$ is $d$-regular but it may have loops and multiple edges. A well-known fact is that if $\Gb_{N,d}$ is conditioned to be simple, then we get back $\Gc_{N,d}$. Moreover, for any $d$, the probability that $\Gb_{N,d}$ is simple converges to a positive $p_d$ as $N \to \infty$. It follows that if $\Gb_{N,d}$ a.a.s.~has a certain property, then so does $\Gc_{N,d}$. Therefore, it suffices to prove our a.a.s.~results for $\Gb_{N,d}$.

With the configuration model one can often easily compute or bound the probability of $\Gb_{N,d}$ having various properties. Such probabilities often decay exponentially in the number of vertices $N$, and the rate can be expressed by some kind of entropy.

Note that $\log$ means natural logarithm throughout the paper. As usual, let 
\[ h(x) \defeq 
\begin{cases} 
-x \log x  & \text{if } x\in (0,1]; \\
0 & \text{if } x=0 .
\end{cases} \]
We will also use the shorthand notation: 
\[ H(x) \defeq h(x)+h(1-x) \quad 
\text{for } x \in [0,1] .\]
%
%
%

In order to check condition \eqref{eq:cond_U} of Lemma~\ref{lem:gen_sd} in random graphs, we will need a result regarding the number of edges of induced subgraphs of $\Gb_{N,d}$. Let $x,t \in (0,1)$ be fixed. Given a subset $U \subseteq \{1,\ldots,N\}$ of density $x$ (i.e., $|U|/N=x$), the probability that the average degree of the induced subgraph $\Gb_{N,d}[U]$ is $td$ decays exponentially as $N \to \infty$. We will see that the exponential rate of decay  is $d \cdot F(x,t)$, where  
\begin{equation} \label{eq:F_formula}
F(x,t) \defeq 
\frac12 h\big( tx \big) 
+ h\big( (1-t)x \big) 
+ \frac12 h\big( 1-(2-t)x \big) 
- H(x). 
\end{equation}
The precise result is the following.
\begin{lemma} \label{lem:av_deg_gen}
Let $x,t \in (0,1)$ such that $(2-t)x\leq 1$. Suppose that $U$ is a fixed subset of $\{1,\ldots,N\}$ of size $xN$. Then the probability that the induced subgraph $\Gb_{N,d}[U]$ has $xtNd/2$ edges (i.e., its average degree is $td$) is at most 
\begin{equation} \label{eq:prob_formula}
(Nd)^{\Oc(1)} \exp\bigg( Nd \cdot F(x,t) \bigg) .
\end{equation}
\end{lemma}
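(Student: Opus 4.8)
The plan is to compute the probability directly in the configuration model $\Gb_{N,d}$ by a first-moment/counting argument, following the standard approach for edge counts of induced subgraphs. Write $u = xN = |U|$ and $u^\comp = N - u$. The event that $\Gb_{N,d}[U]$ has exactly $xtNd/2$ edges means that among the $ud$ half-edges attached to vertices in $U$, exactly $m \defeq xtNd$ of them are paired with other half-edges in $U$ (forming $m/2 = xtNd/2$ internal edges), and the remaining $ud - m = (1-t)xNd$ half-edges are paired with half-edges in $U^\comp$. I would first choose which of the $ud$ half-edges of $U$ go ``inward'' (a binomial factor), pair those up among themselves ($(m-1)!! $ ways), choose which half-edges of $U^\comp$ receive the cross edges and match them to the outward half-edges of $U$, and finally pair up all remaining half-edges of $U^\comp$ with each other. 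Dividing by the total number $(Nd-1)!!$ of pairings gives an exact expression.

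The second step is to estimate this ratio asymptotically. Using $(2n-1)!! = (2n)! / (2^n n!)$ and Stirling's formula $n! = n^{\Oc(1)} \exp(-n\log n + n) $ — equivalently $\log(2n-1)!! = n\log(2n) - n + \Oc(\log n)$ — every factorial/double-factorial contributes a term of the form $(\text{count}) \cdot \log(\text{count})$ up to lower-order $(Nd)^{\Oc(1)}$ factors and terms linear in $Nd$. Collecting the leading exponential terms, the linear-in-$Nd$ contributions cancel (as they always do in these computations, since total half-edge counts are preserved), and what remains is a sum of $h(\cdot)$-type entropy terms. Concretely: the ``inward half-edges of $U$'' contribute via $(tx)$, paired internally with the factor $\tfrac12 h(tx)$ because $m/2$ edges use $m = txNd$ half-edges; the cross half-edges on the $U$-side contribute $h((1-t)x)$; the remaining half-edges inside $U^\comp$, of which there are $\big(1-(2-t)x\big)Nd$, contribute $\tfrac12 h\big(1-(2-t)x\big)$; and the denominator $(Nd-1)!!$ together with the choice of which half-edges are ``in'' $U$ versus $U^\comp$ contributes $-H(x)$. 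Matching these against \eqref{eq:F_formula} identifies the rate as exactly $d\cdot F(x,t)$, and since we only claim an upper bound, it suffices to keep $\leq$ throughout (dropping, e.g., the fact that cross-edges must be matched in a specific count only helps). The hypothesis $(2-t)x \leq 1$ is exactly what guarantees $1-(2-t)x \geq 0$ so that $h\big(1-(2-t)x\big)$ is well-defined and there are enough half-edges in $U^\comp$ to absorb the cross edges.

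I would organize the writeup as: (1) set up the counting formula as an exact product of binomials and double factorials over $(Nd-1)!!$; (2) take logarithms and apply the Stirling estimate $\log n! = n\log n - n + \Oc(\log n)$ uniformly; (3) verify the linear terms cancel and collect the entropy terms into $Nd\cdot F(x,t) + \Oc(\log(Nd))$. Since all the counts involved are at most $Nd$, there are $\Oc(1)$ many of them, so the accumulated polynomial error is $(Nd)^{\Oc(1)}$, matching \eqref{eq:prob_formula}. The main obstacle — really just bookkeeping rather than a genuine difficulty — is making sure the entropy terms are grouped correctly so that the $H(x)$ subtraction and the factors of $\tfrac12$ come out exactly as in \eqref{eq:F_formula}; a useful sanity check is that $F(x,t)$ should vanish at $t = x$ (when the induced subgraph has its ``typical'' density), which pins down the normalization. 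I expect this lemma to be proved cleanly in a page, with the routine Stirling manipulations deferred to the technical section as the introduction promises.
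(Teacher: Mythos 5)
Your proposal is correct and takes essentially the same approach as the paper: write the configuration-model probability as an exact product of binomials and double factorials over $(Nd-1)!!$, then read off the exponential rate as a sum of $h(\cdot)$ terms matching \eqref{eq:F_formula}. The only cosmetic difference is that the paper regroups the exact expression as a single multinomial coefficient divided by $\binom{Nd}{Md}$ (with a leftover factor visibly $<1$) and applies a clean multinomial-entropy bound, which sidesteps the bookkeeping you flag about verifying that the linear-in-$Nd$ Stirling contributions cancel.
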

\begin{proof}
Fix $d$ and $N$. Let $M$ be a positive integer and $r \geq 0$ such that $rM/2$ is an integer and $M(2d-r) \leq Nd$. By $P_{M,r}$ we denote the probability that for a given subset $U \subseteq \{1,\ldots,N\}$ of size $M$, the random graph $\Gb_{N,d}$ has $rM/2$ edges inside $U$. It is easy to see that 
\[ P_{M,r} = \binom{Md}{Mr} 
\binom{(N-M)d}{M(d-r)} \,  
\frac{(Mr)!}{(Mr)!!} \, 
\frac{(Nd-M(2d-r))!}{(Nd-M(2d-r))!!} \, 
\big( M(d-r) \big)! \, 
\frac{(Nd)!!}{(Nd)!} ,\]
which can be rewritten with binomial and multinomial coefficients as 
\begin{equation} \label{eq:P_nomial_version}
P_{M,r} = 
\underbrace{\frac{(M(d-r))!}{\big( (M(d-r))!! \big)^2}}_{
<1 } \;
\binom{\frac{Nd}{2}}{\frac{Mr}{2} \;\; \frac{M(d-r)}{2} \;\; \frac{M(d-r)}{2} \;\;  \frac{Nd-M(2d-r)}{2} } 
\Bigg/ \binom{Nd}{Md} .
\end{equation}
It is well known\footnote{One may prove this by considering the sum $n^n=(k_1+\cdots+k_s)^n=\sum \binom{n}{j_1 \; \ldots \; j_s} k_1^{j_1} \cdots k_s^{j_s}$ with $\binom{n+s-1}{s-1}$ terms and noting that we get the largest term for $j_i=k_i$.} that multinomial coefficient can be estimated using entropy:
%
\[ \binom{n+s-1}{s-1}^{-1} \exp\left(n \sum_{i=1}^s h(k_i/n) \right) 
\leq \binom{n}{k_1 \; k_2 \; \ldots \; k_s} 
\leq \exp\left(n \sum_{i=1}^s h(k_i/n) \right) .\]
Setting $x \defeq M/N$ and $t \defeq r/d$, these bounds immediately yield \eqref{eq:prob_formula} from \eqref{eq:P_nomial_version}. 
\end{proof}
We will also need the following result, which can be derived easily from \eqref{eq:P_nomial_version}.
\begin{lemma} \label{lem:av_deg_2}
For every integer $d \geq 3$ and every real number $\dhat > 2$ there exists $\eps>0$ such that $\Gb_{N,d}$ a.a.s.~has no induced subgraph on at most $\eps N$ vertices with average degree at least $\dhat$.
\end{lemma}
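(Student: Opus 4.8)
The plan is to bound the expected number of ``bad'' induced subgraphs in the configuration model $\Gb_{N,d}$ using a first-moment argument, then show this expectation is $o(1)$ for a suitable $\eps$. Fix $\dhat>2$; it suffices to rule out, for every $m\leq \eps N$, the existence of a set $U$ of size $m$ spanning at least $\lceil \dhat m/2\rceil$ edges. For a fixed $U$ of size $m$ and a target edge count $\ell$ inside $U$, Lemma~\ref{lem:av_deg_gen} (or directly \eqref{eq:P_nomial_version}) gives an upper bound on the probability that $\Gb_{N,d}[U]$ has exactly $\ell$ edges; summing over $\ell$ from $\lceil \dhat m/2\rceil$ up to the maximum possible and multiplying by $\binom{N}{m}$ yields a bound on the expected number of such subgraphs of size $m$. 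I would then sum over $m$ from, say, $2$ up to $\eps N$ and show the total is $o_N(1)$.

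The key estimate is the behaviour of the exponent for small density $x=m/N$. Writing $t=\ell/(dm/2)\cdot(1/1)$—more precisely parametrizing so that $U$ spans average degree $td$ with $t\geq \dhat/d$—the probability bound from \eqref{eq:prob_formula} is $(Nd)^{\Oc(1)}\exp(Nd\cdot F(x,t))$, and $\binom{N}{m}\leq \exp(N H(x))$. So the per-$m$ contribution is at most $(Nd)^{\Oc(1)}\exp\big(N(H(x)+d\,F(x,t))\big)$. Using the explicit formula \eqref{eq:F_formula} and $h(y)=-y\log y$, the dominant terms as $x\to 0$ come from $\tfrac12 h(tx)$ and $h((1-t)x)$, each of order $x\log(1/x)$, while $h(1-(2-t)x)=\Oc(x)$ and $H(x)=\Oc(x\log(1/x))$. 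A short computation shows $H(x)+dF(x,t)$ behaves like $-c\, x\log(1/x)$ for a positive constant $c=c(d,\dhat)>0$ whenever $t\geq \dhat/d>2/d$, because the ``$-$'' coming from the leading $\tfrac{d}{2}h(tx)+dh((1-t)x)$ pieces (which scale like $-\tfrac{d}{2}tx\log x - d(1-t)x\log x$, i.e.\ coefficient $\sim \tfrac{d}{2}(2-t)\cdot$ nothing—let me restate) outweighs the $+H(x)$ term whose leading coefficient in $x\log(1/x)$ is $1$: one needs the edge-count entropy savings to beat the $\binom{N}{m}$ choice cost, and since $\dhat>2$ the subgraph is strictly denser than a forest, giving the required margin. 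Concretely, one checks that there is $\eps=\eps(d,\dhat)>0$ and $\rho=\rho(d,\dhat)>0$ with $H(x)+dF(x,t)\leq -\rho\, x\log(1/x)$ for all $0<x\leq\eps$ and all admissible $t\geq \dhat/d$.

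Given such an $\eps$, the expected number of bad subgraphs is at most
\[
\sum_{m=2}^{\eps N} (Nd)^{\Oc(1)}\,\exp\!\Big(-\rho\,\frac{m}{N}\log\frac{N}{m}\cdot N\Big)
= \sum_{m=2}^{\eps N} (Nd)^{\Oc(1)}\,\Big(\frac{m}{N}\Big)^{\rho m},
\]
and the right-hand side is dominated by its first few terms (where $m$ is bounded): each such term is $(Nd)^{\Oc(1)} N^{-\rho m}\to 0$, and the tail ($m$ growing with $N$) is even smaller since $(m/N)^{\rho m}$ decays super-polynomially. Hence the expectation is $o_N(1)$, so $\Gb_{N,d}$ a.a.s.\ has no such subgraph, and the same conclusion transfers to $\Gc_{N,d}$ by the standard contiguity fact recalled in Section~\ref{sec:subgraphs}.

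I expect the main obstacle to be the careful bookkeeping in the second paragraph: verifying that the entropy exponent $H(x)+dF(x,t)$ is genuinely negative (with a $x\log(1/x)$-order gap, not merely $\Oc(x)$) uniformly over the whole admissible range of $t$, including $t$ close to $1$ where $h((1-t)x)$ degenerates. Handling the endpoint $t\to 1$ (very dense $U$, so $\ell$ near $\binom{m}{2}$) may require treating that regime separately—there the bound $e[U]\leq \binom{m}{2}\leq dm/2$ forces $m\leq d+1$, i.e.\ only finitely many sizes, which are then killed directly by $\binom{N}{m}\cdot(\text{prob})\leq N^m\cdot N^{-(\text{edges})}$ with more edges than vertices. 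Combining the small-$t$ entropy estimate with this finite-$m$ dense case gives the lemma.
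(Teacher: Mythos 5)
Your outline parallels the paper's first-moment argument in the configuration model, but you route through Lemma~\ref{lem:av_deg_gen} and the entropy function $F$, whereas the paper works directly from the explicit formula \eqref{eq:P_nomial_version} and derives the elementary bound $\Zc_{M,r}<\big(e^{2d}d^d(M/(Ne))^{r/2-1}\big)^M$, which gives $\Zc_{M,r}<2^{-M}$ uniformly for $M\leq\eps N$ once $\eps$ is small. The paper's route has the advantage of producing explicit constants without invoking uniform entropy estimates, at the price of a slightly cruder bound; your route reuses machinery already set up for Corollary~\ref{cor:all_subgraphs}. Both are legitimate in spirit.

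There is, however, a genuine gap in your treatment of small $m$, and you diagnose it in the wrong place. The claim that ``each such term is $(Nd)^{\Oc(1)}N^{-\rho m}\to0$'' for bounded $m$ is false in general: the polynomial prefactor from Lemma~\ref{lem:av_deg_gen} (and from summing over the $\Oc(Nd)$ possible edge counts) is not dominated by $N^{-\rho m}$ when $\rho m$ is smaller than the hidden exponent, and since $\rho\approx\dhat/2-1$ can be arbitrarily small for $\dhat$ close to $2$, this fails already at $m=2,3,\dots$. The problematic regime is \emph{all} bounded $m$, not $t\to1$; in fact the endpoint $t=1$ is harmless because $F(x,1)=-H(x)/2$ makes $H(x)+dF(x,1)=H(x)(1-d/2)$ strongly negative for $d\geq3$. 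Also note that your bound $e[U]\leq\binom{m}{2}$ does not hold in the multigraph $\Gb_{N,d}$, so it cannot be used to force $m\leq d+1$. The fix is precisely the one you gesture at but apply too narrowly: for every $m$ below some fixed threshold $M_0$ (not just the dense ones), use the direct combinatorial estimate that a fixed $m$-set has $\ell>m$ internal edges with probability $\Oc(N^{-\ell})$, so the expected count $\binom{N}{m}\cdot\Oc(N^{-\ell})=\Oc(N^{m-\ell})\to0$ since $\dhat>2$ forces $\ell>m$; then apply the entropy estimate only for $m>M_0$. This two-regime split is exactly what the paper does (its $M\leq M_0$ versus $M_0<M\leq\eps N$ cases). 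With that split made explicit, your proposal goes through.
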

\begin{proof}
Consider the random graph $\Gb_{N,d}$ produced by the configuration model. Given a positive integer $M \leq N$ and a rational number $r \in [0,d]$ with $rM/2 \in \Nb$, we define $\Zc_{M,r}$ to be the expected number of sets $U \subset \{1,\ldots,N\}$ with $|U|=M$ and $e[U]=r M/2$. So, with our previous notation $P_{M,r}$, we have  
\[ \Zc_{M,r} = \binom{N}{M} P_{M,r} .\]
Using that $n! \geq (n/e)^n$ and $n(n-1) \cdots (n-m+1) \geq (n/e)^m$ and trivial estimates we get from \eqref{eq:P_nomial_version} that 
\begin{equation} \label{eq:Z_bound}
\Zc_{M,r} <
\frac{N^M}{\left(\frac{M}{e}\right)^M} \;
\frac{\left(\frac{Nd}{2}\right)^{M(d-r/2)} }{\left( \frac{Mr}{2e} \right)^{Mr/2} \left( \frac{M(d-r)}{2e} \right)^{M(d-r)} } \;
\frac{(Md)^{Md}}{\left(\frac{Nd}{e}\right)^{Md}} 
< \left( e^{2d} d^d \left( \frac{M}{Ne} \right)^{r/2-1} \right)^M .
\end{equation}
We need to consider the case $r \geq \dhat$. Note that $\dhat/2-1$ is positive since $\dhat>2$. Therefore, we can choose $\eps>0$ in such a way that 
\[ e^{2d} d^d \left( \frac{\eps}{e} \right)^{\dhat/2-1} 
< \frac{1}{2} .\]
For a given $N$, let us consider all positive $M \leq \eps N$, and then, for a given $M$, all rational $r$ satisfying $\dhat \leq r \leq d$ and $rM/2 \in \Nb$. For any such $M$ and $r$, \eqref{eq:Z_bound} implies that $\Zc_{M,r} < 2^{-M}$. Note that there are at most $Md/2$ possible values of $r$ for a given $M$, and hence 
\[ \Zc_{M} \defeq \sum_r \Zc_{M,r} < \frac{Md}{2} 2^{-M} .\] 
For a given $\delta>0$, choose $M_0$ in such a way that 
\[ \sum_{M: \, M_0 < M \leq \eps N} \Zc_{M} < 
\sum_{M > M_0} \frac{Md}{2} 2^{-M} < \de/2.\]
As for $M \leq M_0$, there are finitely many terms $\Zc_{M,r}$ in this range, each converging to $0$ as $N \to \infty$, so 
\[ \sum_{M \leq M_0} \Zc_{M} < \de/2 \quad 
\text{for sufficiently large $N$.} \]
It follows that $\sum_{M \leq \eps N} \Zc_M <\de/2 + \de/2 = \de$. By Markov's inequality we get that $\Gb_{N,d}$ fails to have the claimed property with probability less than $\de$.
\end{proof}

Next we list some properties of the function $F(x,t)$. The proofs, which consist of mostly straightforward calculations, are postponed until Section~\ref{sec:F_properties}. 
\begin{proposition} \label{prop:F_properties}
The function $F(x,t)$, defined in \eqref{eq:F_formula} for any $x,t \in [0,1]$ with $(2-t)x \leq 1$, is clearly continuous. It also has the following properties.
\begin{enumerate}[(i)]
\item $F(x,t) \leq 0$ with equality if and only if $x=0$ or $x=t$.
\item For any fixed $x \in (0,1)$, the function $t \mapsto F(x,t)$ is strictly monotone decreasing (and concave) on $[x,1]$ with $F(x,x)=0$ and $F(x,1)=-H(x)/2$. It has the following derivative:
\[ \partial_t F(x,t) 
= \frac12 x \log \left( 1- \frac{t-x}{t \big( 1-(2-t)x \big)} \right) . \]
\item For any fixed $t \in (0,1)$, the function $x \mapsto F(x,t) \big/ H(x)$ maps $(0,t]$ onto $(-t/2,0]$ strictly monotone increasingly and continously. 
\item Symmetry: for $x'=1-x$ and $t'\in (0,1)$ such that $x(1-t)=x'(1-t')$ we have 
\[ F(x,t)=F(x',t') 
\quad \text{and} \quad 
H(x)=H(x') .\]
\end{enumerate}
\end{proposition}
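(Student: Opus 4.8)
All four statements reduce to elementary calculus once one unpacks the definition
\[ F(x,t) = \tfrac12 h(tx) + h\big((1-t)x\big) + \tfrac12 h\big(1-(2-t)x\big) - H(x), \qquad h(u) = -u\log u, \]
so the plan is simply to organize the computation in the right order, starting from part (ii), since parts (i) and (iii) will follow from it, and part (iv) is a separate algebraic identity.

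\emph{Part (ii) and (i).} Fix $x\in(0,1)$ and differentiate in $t$. Using $h'(u) = -\log u - 1$ and the chain rule, the $-1$ terms cancel because $\tfrac12(x) + (-x) + \tfrac12(x)\cdot(\text{coefficient})$ — more precisely, the coefficients of the three $h$-arguments with respect to $t$ are $x$, $-x$, and $x$, which sum to $x$; one must track this carefully, but the claimed formula
\[ \partial_t F(x,t) = \tfrac12 x \log\!\left(1 - \frac{t-x}{t(1-(2-t)x)}\right) \]
should drop out after combining the logarithms (the argument of the log is $\dfrac{(1-t)^2 \cdot [\text{something}]}{tx \cdot (1-(2-t)x)}$ rearranged; I'd verify that $1 - \frac{t-x}{t(1-(2-t)x)} = \frac{(1-t)(1-(1-t)x)}{t(1-(2-t)x)}$ or similar). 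Since $t\ge x$ forces $t-x\ge0$ and the denominator $t(1-(2-t)x)>0$ on the relevant range, the log-argument lies in $(0,1]$, so $\partial_t F \le 0$ with equality only at $t=x$; this gives strict monotonicity on $[x,1]$. Concavity follows by differentiating once more and checking the sign, or more cheaply by noting $F$ is a sum of compositions of the concave function $h$ with affine maps of $t$ — but the $-H(x)$ term is constant in $t$ and the middle term $h((1-t)x)$ is concave in $t$, as is each other piece, so $t\mapsto F(x,t)$ is concave directly. The endpoint values $F(x,x)=0$ and $F(x,1) = \tfrac12 h(x) + 0 + \tfrac12 h(1-x) - H(x) = -\tfrac12 H(x)$ are immediate substitutions. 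Part (i): for fixed $x$, the maximum of $F(x,\cdot)$ over $t$ is at $t=x$ where it equals $0$; and $F(0,t)=0$ trivially, so $F\le 0$ everywhere with equality exactly when $x=0$ or $t=x$.

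\emph{Part (iii).} Fix $t\in(0,1)$ and set $G(x) \defeq F(x,t)/H(x)$ on $(0,t]$. The endpoint $x=t$ gives $F(t,t)=0$ so $G(t)=0$; for the limit as $x\to0^+$, both $F$ and $H$ vanish, and a first-order expansion $h(cx) = -cx\log(cx) = cx(-\log x) - cx\log c$ shows $F(x,t) \sim \big(\tfrac12 t + (1-t) + \tfrac12(-1)\big)\cdot x(-\log x) = \tfrac t2\, x(-\log x)$ wait — one must be careful that the $-H(x)$ term contributes $-x(-\log x) - \dots$; recollecting, $F(x,t)/\big(x(-\log x)\big) \to \tfrac12 t + (1-t) + \tfrac12 - 1 = \tfrac t2$ is wrong sign-wise, so I will instead compute the leading coefficient honestly as $\big(\tfrac12 t + (1-t) + \tfrac12\cdot 0\big) - 1 = -\tfrac t2$ (the middle $\tfrac12 h(1-(2-t)x)$ term has argument $\to1$, contributing $0$ at leading order in $x\log x$), and similarly $H(x) \sim x(-\log x)$, giving $G(x)\to -t/2$. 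Monotonicity of $G$ is the one genuinely fiddly point: I would show $G'(x)>0$ on $(0,t)$, equivalently $\partial_x F(x,t)\,H(x) - F(x,t)\,H'(x) > 0$. Rather than brute force, I'd look for a slicker argument: write $G(x) = -\int$ something, or use that $F/H$ is a ratio of entropies and exploit the known convexity structure; alternatively, parametrize and reduce to a single-variable inequality $\log$-type that follows from $\log(1+u)\le u$. The continuity and the fact that the map is \emph{onto} $(-t/2,0]$ then follow from monotonicity plus the two endpoint values.

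\emph{Part (iv) and the main obstacle.} The symmetry is purely algebraic: with $x'=1-x$ and $x(1-t) = x'(1-t')$, first note $H(x)=H(x')$ is immediate from $H(x)=h(x)+h(1-x)$. For $F$, one checks that the three arguments $\{tx,\ (1-t)x,\ 1-(2-t)x\}$ get permuted (with the correct $\tfrac12$-weights) into $\{t'x',\ (1-t')x',\ 1-(2-t')x'\}$: indeed $(1-t)x = (1-t')x'$ by hypothesis, and one verifies $tx$ and $1-(2-t)x$ swap roles with $1-(2-t')x'$ and $t'x'$ — this uses $x+x'=1$ and the defining relation, a short substitution. I expect the main obstacle to be the monotonicity claim in part (iii): unlike the others, it is not a one-line substitution or a direct concavity observation, and proving $G'(x)>0$ cleanly requires massaging the derivative $\partial_x F$ into a form where the inequality is transparent. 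Everything else is bookkeeping with $h'(u)=-\log u - 1$ and careful tracking of which linear combinations of coefficients vanish.
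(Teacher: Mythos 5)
Your plan for (ii) and (iv) matches the paper essentially step for step. The concavity observation (each term is $h$ composed with an affine function of $t$, hence concave, and the sum of concave functions is concave) is cleaner than the paper, which merely remarks that concavity would follow from the second derivative. Your small-$x$ expansion for the limit $-t/2$ in (iii) is also correct once you fix the sign slip.

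Two substantive comments. First, the paper proves (i) by a different and cleaner route: it interprets $p_{00}=xt$, $p_{01}=p_{10}=(1-t)x$, $p_{11}=1-(2-t)x$ as a joint distribution on $\{0,1\}^2$ with both marginals $(x,1-x)$, observes that $2F(x,t)$ is exactly $H(X,Y)-H(X)-H(Y)$, and invokes subadditivity of entropy, with equality iff independence, i.e.\ $xt=x^2$. Your route (deduce (i) from (ii)) is also workable but has a gap as written: (ii) only gives monotonicity on $[x,1]$, so you would additionally need to check that $\partial_t F>0$ for $t<x$ (which indeed follows from the same derivative formula since the log-argument exceeds $1$ there) to conclude that $t=x$ is a global maximum.

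Second, and more importantly, part (iii) is a genuine gap in your proposal. You correctly flag the strict monotonicity of $x\mapsto F(x,t)/H(x)$ as ``the one genuinely fiddly point'' and list some candidate strategies, but you do not carry any of them out, and the brute-force attack on $G'(x)$ you start with is not the path the paper takes. The paper avoids differentiating the ratio altogether. For a fixed constant $c\in(0,t/2)$ it sets $\widetilde G(x)\defeq F(x,t)+cH(x)$, computes
\[
\widetilde G''(x)=\frac{\left(\tfrac{t}{2}-c\right)-x(2-t)\left(\tfrac12-c\right)}{x(1-x)\bigl(1-(2-t)x\bigr)},
\]
which changes sign exactly once on $(0,t)$, so $\widetilde G$ is convex then concave. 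Since $\widetilde G(x)<0$ for small $x$ (by the $-t/2$ limit) and $\widetilde G(t)=cH(t)>0$, a convex-then-concave function with these endpoint signs has exactly one zero. Thus $F/H=-c$ has a unique solution in $(0,t)$ for each $c\in(0,t/2)$, none for $c>t/2$ (there $\widetilde G$ is concave and positive at both ends), and combined with continuity and part (i) this forces $x\mapsto F(x,t)/H(x)$ to be a strictly increasing bijection from $(0,t]$ onto $(-t/2,0]$. If you want to complete your proposal you should either reproduce this ``root-counting via one inflection point'' argument or find a genuine alternative; as it stands, (iii) is asserted but not proved.
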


We combine the results of this section to prove the following.
\begin{corollary} \label{cor:all_subgraphs}
Let
\begin{equation} \label{eq:Fd_def}
F_d(x,t) \defeq d \cdot F(x,t) + H(x) ,
\end{equation}
and suppose that for some $0<x_0<t_0<1$ we have
\begin{equation} \label{eq:cor_cond}
F_d(x_0,t_0) < 0 
\text{, that is, }
F(x_0,t_0) \big/ H(x_0) < -\frac{1}{d}
\end{equation}
Then it holds a.a.s.~for $\Gb_{N,d}$ that all induced subgraphs on at most $x_0 N$ vertices have average degree at most $t_0 d$.
\end{corollary}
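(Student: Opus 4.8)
The plan is to combine the union-bound/first-moment estimate coming from Lemma~\ref{lem:av_deg_gen} with the monotonicity properties of $F$ listed in Proposition~\ref{prop:F_properties}, while using Lemma~\ref{lem:av_deg_2} to handle the vanishingly small subgraphs where the exponential rate alone is not decisive. Concretely, write $\Zc_{M,r}$ for the expected number of pairs $(U,\text{edge set})$ with $|U|=M$ and $e[U]=rM/2$, so that $\Zc_{M,r}=\binom{N}{M}P_{M,r}$. By Lemma~\ref{lem:av_deg_gen} and the standard bound $\binom{N}{M}\leq \exp(NH(M/N))$, for $M=xN$ and $r=td$ we get
\[
\Zc_{M,r} \leq (Nd)^{\Oc(1)}\exp\!\Big(N\big(d\,F(x,t)+H(x)\big)\Big) = (Nd)^{\Oc(1)}\exp\!\big(N\,F_d(x,t)\big).
\]
Thus if we can show $F_d(x,t)$ is bounded away from $0$ (negative) uniformly over the relevant range of $(x,t)$, a union bound over the $\Oc(N^2)$ choices of $(M,r)$ finishes the argument. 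The goal is to leverage \eqref{eq:cor_cond} at the single point $(x_0,t_0)$ to get this uniform negativity for all $x\le x_0$ and all $t\ge t_0$ (subgraphs denser than $t_0 d$ on $\le x_0 N$ vertices).

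The key monotonicity steps are the following. First, for fixed $x$, part~(ii) says $t\mapsto F(x,t)$ is strictly decreasing on $[x,1]$; since $F_d(x,t)=d\,F(x,t)+H(x)$ and $H(x)$ does not depend on $t$, the function $t\mapsto F_d(x,t)$ is also strictly decreasing in $t$. Hence for any $t\geq t_0$ we have $F_d(x,t)\leq F_d(x,t_0)$, so it suffices to control $F_d(x,t_0)$ for $x\leq x_0$ (with the constraint $(2-t_0)x\leq 1$, automatically satisfied since $x\le x_0\le(2-t_0)x_0\le 1$... actually one should just note $x_0<t_0<1$ forces the domain condition, or restrict to $x$ small). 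Second, for fixed $t=t_0$, part~(iii) says $x\mapsto F(x,t_0)/H(x)$ is strictly increasing on $(0,t_0]$; combined with \eqref{eq:cor_cond} which states $F(x_0,t_0)/H(x_0)<-1/d$, we get $F(x,t_0)/H(x)\leq F(x_0,t_0)/H(x_0)<-1/d$ for all $0<x\leq x_0$, i.e. $F_d(x,t_0)=H(x)\big(d\,F(x,t_0)/H(x)+1\big)<0$, and moreover $F_d(x,t_0)/H(x)$ is bounded above by the negative constant $c_0\defeq d\cdot F(x_0,t_0)/H(x_0)+1<0$.

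Now for the union bound: pick a small $\eps>0$ via Lemma~\ref{lem:av_deg_2} with $\dhat=t_0 d$ (note $t_0 d>x_0 d$ and we need $t_0 d>2$; if $t_0 d\le 2$ the statement would need $d$ large, but actually we can always take $\dhat=\max(t_0 d,\,2+\eta)$ for small $\eta$ — subgraphs of average degree $\ge t_0 d$ a fortiori have average degree $\ge$ this if $t_0 d$ is larger; if $t_0 d\le 2$ then average degree at most $2$ on a connected-ish subgraph still needs handling, but $F_d<0$ already rules those out for $x$ bounded away from $0$, so the clean split is: $x\le\eps$ handled by Lemma~\ref{lem:av_deg_2}, $x>\eps$ handled by the rate). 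On the range $\eps N\leq M\leq x_0 N$, we have $H(x)\geq H(\eps)>0$ (using $x\le x_0<1$ so $H$ is bounded below on $[\eps,x_0]$), hence $F_d(x,t_0)\leq c_0 H(\eps)<0$; summing $\Zc_{M,r}\leq(Nd)^{\Oc(1)}\exp(N c_0 H(\eps))$ over the $\Oc(N^2)$ values of $(M,r)$ with $r\geq t_0 d$ gives a quantity tending to $0$, so by Markov's inequality a.a.s. no such subgraph exists. Combined with Lemma~\ref{lem:av_deg_2} on $M\leq\eps N$, we conclude a.a.s. there is no induced subgraph on $\leq x_0 N$ vertices with average degree $>t_0 d$ (equivalently $\geq t_0 d$ after absorbing a harmless $o(1)$ in the density, or arguing directly with $r\ge t_0 d$). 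The main obstacle I anticipate is the bookkeeping at the boundary — making sure the two regimes $x\le\eps$ and $\eps<x\le x_0$ genuinely cover all cases including the degenerate small-average-degree situations and that the constraint $(2-t)x\le 1$ in Lemma~\ref{lem:av_deg_gen} is respected throughout the summation (one may need $x_0$ small or to restrict attention, since for $t$ close to $1$ and $x$ close to $x_0$ one needs $(2-t)x\le1$); but since part~(ii) hands us $F(x,1)=-H(x)/2$ as an explicit endpoint value, the extreme case $t=1$ is directly checkable and the intermediate $t$ follow by the monotonicity, so no real difficulty remains beyond careful casework.
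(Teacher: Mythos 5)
Your overall strategy is the paper's: expected-count of $(U,\text{edge set})$ pairs via Lemma~\ref{lem:av_deg_gen} together with $\binom{N}{M}\le\exp(NH(M/N))$, monotonicity of $F_d$ in $t$ (from Proposition~\ref{prop:F_properties}(ii)) and of $F(x,t_0)/H(x)$ in $x$ (from (iii)) to propagate the negativity from the single point $(x_0,t_0)$ to the whole rectangle $x\le x_0$, $t\ge t_0$, a uniform gap over $[\eps,x_0]$ because $H$ is bounded below there, and Lemma~\ref{lem:av_deg_2} to handle $x\le\eps$. All of that matches the paper exactly.

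However, there is a genuine (if localized) gap in the part where you invoke Lemma~\ref{lem:av_deg_2}. That lemma requires $\dhat>2$, and you correctly observe you need $t_0d>2$ — but you do not establish it, and the fallback you sketch does not work. If (hypothetically) $t_0d\le 2$ and you set $\dhat=2+\eta$, Lemma~\ref{lem:av_deg_2} only excludes small subgraphs of average degree $\ge 2+\eta$, leaving average degrees in the gap $(t_0 d,\,2+\eta)$ uncontrolled on sets of size $\le\eps N$; the remark that ``$F_d<0$ rules those out for $x$ bounded away from $0$'' is circular, since $x\le\eps$ is precisely the regime not bounded away from $0$. The correct resolution is a one-line consequence of Proposition~\ref{prop:F_properties}(iii), which you already cite: for $x_0\in(0,t_0]$, the value $F(x_0,t_0)/H(x_0)$ lies in $(-t_0/2,\,0]$, so the hypothesis $F(x_0,t_0)/H(x_0)<-1/d$ forces $-1/d>-t_0/2$, i.e.\ $t_0d>2$. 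This is exactly how the paper justifies the use of Lemma~\ref{lem:av_deg_2}, and with that observation your proof closes cleanly; everything else (the domain check $(2-t)x\le(2-t_0)x_0<1$, the uniform negativity via $c_0$, the polynomial union bound) is correct and essentially identical to the paper's argument.
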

\begin{proof}
If $xN \in \Nb$, then the number of subsets $U\subset \{1,\ldots,N\}$ of size $xN$ is
\[ \binom{N}{Nx} \leq \exp\bigg( N \cdot H(x) \bigg) .\]
So, by Lemma~\ref{lem:av_deg_gen}, the expected number of induced subgraphs with size $xN$ and average degree $td$ is at most
\[ \binom{N}{Nx} (Nd)^{\Oc(1)} \exp\bigg( Nd \cdot F(x,t) \bigg) = (Nd)^{\Oc(1)} \exp\bigg( Nd \cdot F(x,t) + N \cdot H(x) \bigg) .\]
%
%
According to Proposition~\ref{prop:F_properties}(iii), condition \eqref{eq:cor_cond} implies $t_0/2>1/d$. Thus $t_0 d>2$, and hence, by Lemma~\ref{lem:av_deg_2}, there exists a positive $\eps>0$ such that a.a.s.~$\Gb_{N,d}$ has no induced subgraph on at most $\eps N$ vertices with average degree above $t_0 d$.

By Proposition~\ref{prop:F_properties}(ii-iii), we know that $\frac{F(x,t)}{H(x)}$ is monotone increasing in $x$ and monotone decreasing in $t$. Therefore, 
\begin{multline*}
\eqref{eq:cor_cond} \Rightarrow 
\frac{F(x_0,t_0)}{H(x_0)} < - \frac{1+\de}{d}  
\;\text{(for some $\de>0$)} 
\Rightarrow
\frac{F(x,t)}{H(x)} < - \frac{1+\de}{d} \; (\forall x \leq x_0; \forall t \geq t_0) \\
\Rightarrow
d \cdot F(x,t) + H(x) \leq - \de \, H(x)  \quad (\forall x \leq x_0; \forall t \geq t_0) \\
\Rightarrow
d \cdot F(x,t) + H(x) \leq - \underbrace{\de \min\big( H(\eps), H(x_0) \big)}_{\de' \defeq}  \quad (\forall \eps \leq x \leq x_0; \forall t \geq t_0) .
\end{multline*}
Since there are $(Nd)^{\Oc(1)}$ ways to choose $x$ and $t$, we get that the expected number of sets $U \subset \{1,\ldots,N\}$ of size between $\eps N$ and $x_0 N$ with average degree at least $t_0 d$ is at most 
\[ (Nd)^{\Oc(1)} \exp\big( -N \de' \big) \to 0 
\quad \text{as } N \to \infty .\]
By Markov's inequality, the proof is complete.
\end{proof}

\section{Star decompositions in random regular graphs}
\label{sec:sd_in_rrg}

In this section we deduce Theorem~\ref{thm:main} from the results of the previous sections. We start with the regime where we can even prescribe which vertices should have $s$ stars.
\begin{theorem} \label{thm:strong_cond}
Let $d=2sk+r$ with $r<2k$. If 
\begin{equation} \label{eq:strong_cond}
F_d( x_0,t_0 ) < 0 
\quad \text{ for } 
x_0=\frac{r}{2k} \text{ and }
t_0=  \frac{d-2k+r}{d},
\end{equation}
then $\Gb_{N,d}$ a.a.s.~(as $N \to \infty$ with $Nd/(2k) \in \Nb$) has the following property: for every partition $A_s \sqcup A_{s+1}=\{1,\ldots,N\}$ with $|A_{s+1}|=rN/(2k)$ there exists a $k$-star decomposition with exactly $j$ stars centered at the vertices in $A_j$ ($j=s,s+1$).

We will refer to \eqref{eq:strong_cond} as the \emph{strong condition}. Later we will see that it is satisfied\footnote{See the tables in Section~\ref{sec:strong} for our exact thresholds for specific values of $d$.} whenever $k<d/3$ or $k<d/2-2.6 \log d$.
\end{theorem}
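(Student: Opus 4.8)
\textbf{Proof plan for Theorem~\ref{thm:strong_cond}.}

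The plan is to combine Lemma~\ref{lem:gen_sd} with Corollary~\ref{cor:all_subgraphs}, together with the symmetry of $F$ from Proposition~\ref{prop:F_properties}(iv), so that a \emph{single} inequality $F_d(x_0,t_0)<0$ at the ``worst'' point controls all set sizes. Fix any partition $\{1,\dots,N\}=A_s\sqcup A_{s+1}$ with $|A_{s+1}|=rN/(2k)$, so $|A_s|=(2k-r)N/(2k)$; here $r=d-2sk\in\{1,\dots,2k-1\}$. Since $s|A_s|+(s+1)|A_{s+1}|=sN+rN/(2k)=Nd/(2k)$, Lemma~\ref{lem:gen_sd} applies (with $A_j=\emptyset$ for $j\neq s,s+1$), and the desired star decomposition exists for this particular partition if and only if \eqref{eq:cond_U} holds for every $U$, i.e.
\[ e[U]\le sk|U\cap A_s|+(s+1)k|U\cap A_{s+1}| \qquad\text{for all }U\subseteq\{1,\dots,N\}. \]
So it suffices to show that a.a.s.\ this inequality holds \emph{simultaneously} for all partitions and all $U$; equivalently, writing $a\defeq|U\cap A_s|$ and $b\defeq|U\cap A_{s+1}|$, that a.a.s.\ every $U$ with $|U|=a+b$ has $e[U]\le k(sa+(s+1)b)=k(s|U|+b)\le k s|U| + k|U|\cdot\frac{b}{|U|}$, and since one can always take $A_{s+1}\supseteq$ as much of $U$ as possible, the binding case is $b=\min(|U|,rN/(2k))$. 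Thus it is enough to prove: a.a.s.\ for $\Gb_{N,d}$, every $U$ with $|U|\le \frac{r}{2k}N$ satisfies $e[U]\le (s+1)k|U|$, i.e.\ average degree $\le 2(s+1)k=d-r+2k=t_0 d$ with $t_0=\frac{d-2k+r}{d}$, and moreover every $U$ satisfies $e[U]\le sk|U|+(s+1)k\cdot\frac{r}{2k}N = sk|U| + \frac{(s+1)r}{2}N$ — but this second family is handled by the complement reformulation \eqref{eq:cond_Uc}, which by the symmetry $F(x,t)=F(1-x,t')$, $H(x)=H(1-x)$ of Proposition~\ref{prop:F_properties}(iv) translates into exactly the \emph{same} small-set edge-count bound applied to $U^\comp$. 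This is the conceptual heart: the two inequalities \eqref{eq:cond_U} and \eqref{eq:cond_Uc}, being equivalent, and the function $F$ being symmetric under $x\leftrightarrow 1-x$ (with the edge-density transforming correctly), means we only ever need to control subgraphs of density at most $x_0=r/(2k)$.

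Concretely, I would argue as follows. First reduce $\Gc_{N,d}$ to $\Gb_{N,d}$ as in Section~\ref{sec:subgraphs}. Then invoke Corollary~\ref{cor:all_subgraphs} with $(x_0,t_0)=\bigl(\frac{r}{2k},\frac{d-2k+r}{d}\bigr)$: hypothesis \eqref{eq:strong_cond} is precisely $F_d(x_0,t_0)<0$, so a.a.s.\ every induced subgraph on at most $x_0N$ vertices has average degree at most $t_0 d$, i.e.\ $e[U]\le \frac12 t_0 d\,|U| = (s+1)k|U|$ whenever $|U|\le x_0N$. Now take any partition $A_s\sqcup A_{s+1}$ and any $U$; set $b=|U\cap A_{s+1}|\le|A_{s+1}|=x_0N$ and $U_{s+1}=U\cap A_{s+1}$. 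The claim \eqref{eq:cond_U} reads $e[U]\le sk|U|+kb$. I split into two cases. If $|U|\le x_0N$ then $e[U]\le(s+1)k|U|=sk|U|+k|U|$, and since one needs this only against the actual $b$, one instead verifies \eqref{eq:cond_Uc} for such $U$ via the complement: $|U^\comp|\ge(1-x_0)N$, and applying Proposition~\ref{prop:F_properties}(iv) with $x=|U|/N$, the edge-density constraint on $U$ that makes \eqref{eq:cond_U} tight corresponds under the symmetry to the constraint on $U^\comp$ already covered, OR—cleaner—one observes directly that $e[U^\comp]\le(s+1)k|U^\comp|$ fails to be what we want and instead we want $e[U^\comp]\le d|U^\comp|-sk|U^\comp|-k|U'_s|\cdot 0\ldots$ Let me instead phrase the whole reduction through \eqref{eq:cond_Uc}: for every $U$, \eqref{eq:cond_Uc} is $e[U^\comp]\le d|U^\comp|-sk|U^\comp|-k|U^\comp\cap A_{s+1}|$, wait—I would in the actual write-up carefully note that $\sum_{j}(d-jk)|U'_j| = (d-sk)|U'_s| + (d-(s+1)k)|U'_{s+1}|$, and that $d-(s+1)k = sk+r-k \ge sk - k + 1$, so the \emph{smaller} coefficient is $d-(s+1)k$ and bounding below by it gives $e[U^\comp]\le (d-(s+1)k)|U^\comp| + k|U'_s|$; the worst case is $|U'_s|=0$, i.e.\ $U^\comp\subseteq A_{s+1}$, so $|U^\comp|\le x_0N$, giving exactly $e[U^\comp]\le(d-(s+1)k)|U^\comp|$, i.e.\ average degree of $G[U^\comp]$ at most $2(d-(s+1)k)$. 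Check: $t_0 d = d-2k+r = d - (s+1)k + (sk+r-k) \ldots$ hmm, $2(d-(s+1)k) = 2d-2(s+1)k = 2d - 2sk - 2k = 2d - (d-r) - 2k = d + r - 2k = t_0 d$. Good — it is the same threshold $t_0 d$. So a.a.s.\ (by Corollary~\ref{cor:all_subgraphs} applied to subgraphs of density $\le x_0$) every $U^\comp\subseteq A_{s+1}$ has $e[U^\comp]\le t_0 d|U^\comp|/2 = (d-(s+1)k)|U^\comp|$, hence \eqref{eq:cond_Uc} holds in that worst case, hence for all $U$, hence for all partitions.

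The only remaining loose end is that Corollary~\ref{cor:all_subgraphs} controls \emph{all} subgraphs of density $\le x_0$, not merely those contained in a fixed $A_{s+1}$ — but that is fine, it is a stronger statement and makes the quantification over partitions free. Finally I would restate that the claimed sufficiency of $k<d/3$ or $k<d/2-2.6\log d$ is deferred: it amounts to checking the inequality $F_d(r/(2k),(d-2k+r)/d)<0$, equivalently $F(x_0,t_0)/H(x_0)<-1/d$, which is a calculus estimate carried out in Section~\ref{sec:strong} using the explicit formula \eqref{eq:F_formula} and the monotonicity/concavity in Proposition~\ref{prop:F_properties}. The main obstacle in \emph{this} argument is bookkeeping — correctly identifying that the binding configuration for \eqref{eq:cond_Uc} is $U^\comp$ entirely inside $A_{s+1}$ and that the resulting average-degree threshold matches $t_0 d$ exactly — rather than anything probabilistically deep; all the probability is already packaged in Corollary~\ref{cor:all_subgraphs}, and all the hard analysis is quarantined in the later section.
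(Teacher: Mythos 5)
Your proposal has a genuine gap in the case split. You apply Corollary~\ref{cor:all_subgraphs} only once, with $(x_0,t_0)=\big(\tfrac{r}{2k},\tfrac{d-2k+r}{d}\big)$, which controls the edge counts of induced subgraphs on at most $x_0 N$ vertices. That lets you verify \eqref{eq:cond_Uc} whenever $|U^\comp|\le x_0 N$, but your final step, ``hence \eqref{eq:cond_Uc} holds in that worst case, hence for all $U$,'' does not follow: for a $U$ with $x_0 N<|U^\comp|<(1-x_0)N$ you cannot reduce to the case $U^\comp\subseteq A_{s+1}$, because shrinking $U^\comp$ changes $e[U^\comp]$ as well as the right-hand side. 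Concretely, if $U=\emptyset$ then $e[U^\comp]=Nd/2 > (d-(s+1)k)N$ (since $r<2k$), so the blanket inequality $e[U^\comp]\le(d-(s+1)k)|U^\comp|$ you invoke is simply false for large $U^\comp$; the bound only holds in the regime where Corollary~\ref{cor:all_subgraphs} applies, i.e.\ $|U^\comp|\le x_0 N$.

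The paper closes this gap by invoking Corollary~\ref{cor:all_subgraphs} a \emph{second} time, with the symmetric parameters $x_0'=1-x_0$ and $t_0'=1-r/d$, after using Proposition~\ref{prop:F_properties}(iv) to verify that $F_d(x_0',t_0')=F_d(x_0,t_0)<0$. This second application yields $e[U]\le sk|U|$ for all $|U|\le(1-x_0)N$, which gives \eqref{eq:cond_U} directly for those $U$; the first application then handles the complementary range $|U^\comp|<x_0 N$ via \eqref{eq:cond_Uc}. The two ranges together cover every $U$. You mention the symmetry of $F$ but never make the second application; your write-up conflates ``one only needs subgraphs of density at most $x_0$'' with the correct statement that one needs subgraphs of density at most $1-x_0$ \emph{and} at most $x_0$ (on the two sides of the complement duality), and the symmetry makes the two required inequalities on $F_d$ equivalent, not redundant. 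There is also a small arithmetic slip ($t_0 d/2 = d-(s+1)k$, not $(s+1)k$), though you recover the correct value later in the text.
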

\begin{proof}
We want to use Corollary~\ref{cor:all_subgraphs} with two settings:
\[ x_0=\frac{r}{2k}; \quad 
t_0=  \frac{d-2k+r}{d} = 1-\frac{2k-r}{d} \]
and
\[ x'_0=1-x_0=1-\frac{r}{2k}=\frac{2k-r}{2k}; \quad
t'_0= 1-\frac{r}{d} .\]
It is easy to check that $x_0(1-t_0)=x'_0(1-t'_0)=r(2k-r)/(2kd)$ and hence we have $F(x_0,t_0)=F(x'_0,t'_0)$ and $H(x_0)=H(x'_0)$ according to Proposition~\ref{prop:F_properties}(iv). It follows that $F_d(x_0,t_0)=F_d(x'_0,t'_0)$, which is negative according to the strong condition \eqref{eq:strong_cond}. So the corollary can be indeed applied both for $x_0,t_0$ and for $x'_0,t'_0$. 

Note that $t'_0d/2=(d-r)/2=sk$ is the smaller of the two relevant coefficients in \eqref{eq:cond_U}. Similarly, $t_0d/2=(d-2k+r)/2=d-(s+1)k$ is the smaller of the two relevant coefficients in \eqref{eq:cond_Uc}. 

Now let $U \subset \{1,\ldots,N\}$ be an arbitrary subset. We distinguish two cases based on the density $|U|/N$. 

\textbf{First case: } $|U|/N \leq x'_0$. By Corollary~\ref{cor:all_subgraphs} it holds a.a.s.~for $\Gb_{N,d}$ that for any such $U$ the induced subgraph on $U$ has average degree at most $t'_0$, that is, 
\[ e[U] \leq \frac{dt'_0}{2} |U|  = sk |U|.\]
It follows that condition \eqref{eq:cond_U} is satisfied for any such $U$ and for any partition $A_s \sqcup A_{s+1}$ because the other coefficient $(s+1)k$ is larger than $sk$.

\textbf{Second case: } $|U|/N > x'_0 \Leftrightarrow |U^\comp|/N< 1-x'_0=x_0$. Similarly, by Corollary~\ref{cor:all_subgraphs} we have  
\[ e[U^\comp] \leq \frac{dt_0}{2} |U^\comp| 
= \big( d- (s+1)k \big) |U^\comp| ,\]
and hence condition \eqref{eq:cond_Uc} is satisfied for any such $U$ and for any partition $A_s \sqcup A_{s+1}$.

In conclusion, for each $U$ either \eqref{eq:cond_U}, or \eqref{eq:cond_Uc} is satisfied. We know, however, that \eqref{eq:cond_U} and \eqref{eq:cond_Uc} are actually equivalent, and hence we showed that it holds a.a.s.~for $\Gb_{N,d}$ that Lemma~\ref{lem:gen_sd} can be applied for all partitions with appropriate sizes.
\end{proof}

Now we turn to the proof of Theorem~\ref{thm:main}. Here, we present the main points of the arguments. The technical parts are postponed until Section~\ref{sec:technical}. 

Theorem~\ref{thm:strong_cond} settles the case when the strong condition holds. We will see in Section~\ref{sec:strong} that the strong condition \eqref{eq:strong_cond} is satisfied if $k<d/3$ or $k<d/2-2.6 \log d$. From this point on we assume that the strong condition does not hold, and hence $k > d/3$ and $k>d/2-2.6 \log d$. In particular, we have $s=1$ and $r \leq k$. Since $k < d/2-1$, we can choose $r \in \{3,4,\ldots,k\}$ such that $d=2k+r$. In this case the density of vertices with $s=1$ and $s+1=2$ stars is 
\[ \al_1 = 1-\frac{r}{2k} 
\quad \text{and} \quad 
\al_2 = \frac{r}{2k} 
\text{, respectively.} \]
Let $A_1 \sqcup A_2$ be any fixed partition of $\{1,\ldots,N\}$ with $|A_j|=\al_j N$ ($j=1,2$). 

Again, we will use a first moment calculation to show that condition \eqref{eq:cond_U} holds for all $U$ with high probability. This time we need to be more careful and take into account the sizes of the intersections $U_j=U \cap A_j$ (rather than simply considering the smaller coefficients in \eqref{eq:cond_U} and \eqref{eq:cond_Uc} as we did under the strong condition). However, we still have a regime where we do not need to worry about the intersection sizes.

Let 
\[ t_0 \defeq \frac{2r}{d} 
\quad \text{and} \quad
t'_0 \defeq \frac{2k}{d} \]
be the $t$-values corresponding to those smaller coefficients ($d-2k$ and $k$, respectively) in the sense that 
\[ \frac{t_0 d}{2} = r= d-2k 
\quad \text{and} \quad
\frac{t'_0 d}{2} = k .\]

Suppose that $x_-$ and $x_+$ are chosen in a way that 
\begin{equation} \label{eq:x_minus_plus}
F_d(x_-,t_0) < 0 
\quad \text{and} \quad 
F_d(1-x_+,t'_0) < 0 .
\end{equation}
Then the same argument gives that it holds a.a.s. for $\Gb_{N.d}$ that, no matter what partition $A_1 \sqcup A_2$ we have, all subsets $U$ below density $x_-$ satisfy condition \eqref{eq:cond_U} and all subsets $U$ above density $x_+$ satisfy condition \eqref{eq:cond_Uc}. Consequently, we may assume that we have a subset $U$ with density in $(x_-,x_+)$.\footnote{In fact, the strong condition is satisfied precisely when we can choose $x_-=x_+=\al_2$.} 

Given a fixed partition $A_1 \sqcup A_2$, we say that a set $U \subset \{1,\ldots,N\}$ has (intersection) profile $(x_1,x_2)$ if $|U_j|=|U \cap A_j|=x_j N$, $j=1,2$. Note that $x_1 \in [0,\al_1]$ and $x_2 \in [0,\al_2]$. 

First we determine the number of sets with profile $(x_1,x_2)$. We have the following for the exponential rate of the number of subsets of size $xN$ of a set of size $\al N$ ($0 \leq x \leq \al \leq 1)$:
\[ \binom{N\al}{Nx} 
\leq \exp\left( N\al \cdot H\left( \frac{x}{\al} \right) \right)
= \exp\bigg( N \big( h(x)+h(\al-x)-h(\al) \big) \bigg) .\] 
For brevity we will use the notation 
\[ g(\al,x) \defeq h(x)+h(\al-x)-h(\al) .\]
Therefore, the number of sets $U$ with profile $(x_1,x_2)$ is of rate $g(\al_1,x_1)+g(\al_2,x_2)$.

For future reference, let us remark that 
\begin{equation} \label{eq:g_bound}
g(\al,x) \leq h(x) + x \log(e \al) \leq h(x) + x .
\end{equation}

We also need the probability that a set $U$ with profile $(x_1,x_2)$ does not satisfy condition \eqref{eq:cond_U}. Note that \eqref{eq:cond_U} is equivalent to $G[U]$ having average degree at most $td$, where
\[ t= \frac{2(k+r) x_1 + 2r x_2}{d(x_1+x_2)} 
= \frac{2r}{d} + \frac{2k x_1}{d(x_1+x_2)} .\]
%
Note that this holds automatically if $t \geq 1$, or equivalently if $x_1/x_2 \geq \al_1/\al_2$. If $t<1$, then the probability that the average degree of $G[U]$ is above $td$ has exponential rate at most $d \cdot F(x_1+x_2,t)$; see Lemma~\ref{lem:av_deg_gen} and Proposition~\ref{prop:F_properties}(ii). Therefore, the exponential rate of the expected number of sets $U$ that have profile $(x_1,x_2)$ and do not satisfy \eqref{eq:cond_U} is given by the following function: 
\begin{equation} \label{eq:eta}
\eta(x_1,x_2) \defeq d \cdot F\left( x_1+x_2, 
\frac{2r}{d} + \frac{2k x_1}{d(x_1+x_2)} \right)+g(\al_1,x_1)+g(\al_2,x_2) .
\end{equation}
Our goal is to show that this is negative for every possible profile. The precise statement that we will prove in Section~\ref{sec:weak} is the following.
\begin{lemma} \label{lem:profile}
Given any $k \geq 2$ and $r\geq 3$, there exist $0<x_-<x_+<1$ satisfying \eqref{eq:x_minus_plus} such that we have $\eta(x_1,x_2)<0$ for any 
\begin{equation} \label{eq:region}
x_1 \in [0,\al_1] \text{ and }
x_2 \in [0,\al_2] \text{ with }
x_1+x_2 \in [x_-,x_+] \text{ and } 
\frac{x_1}{x_2} \leq \frac{\al_1}{\al_2} .
\end{equation}
\end{lemma}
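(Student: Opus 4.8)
\section*{Proof proposal for Lemma~\ref{lem:profile}}

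The plan is to fix the total density $x\defeq x_1+x_2\in[x_-,x_+]$ and optimise over how $U$ splits between $A_1$ and $A_2$. Writing $x_1=\al_1x-\de$ and $x_2=\al_2x+\de$ (so that $\de=0$ is the extremal ratio $x_1/x_2=\al_1/\al_2$, i.e.\ $t=1$), a short computation turns the argument of $F$ in \eqref{eq:eta} into $t=1-\tfrac{2k\de}{dx}$, with admissible range $\de\in[0,\de_{\max}]$, where $\de_{\max}\defeq\min\bigl(\al_1x,\,\al_2(1-x)\bigr)$. For fixed $x$, the map $\de\mapsto\eta$ is (up to an additive constant) a positive combination of terms $h(\,\cdot\,)$ of affine functions of $\de$ coming from $F$, plus the concave functions $g(\al_1,x_1(\de))$ and $g(\al_2,x_2(\de))$; hence it is concave in $\de$. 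This uses only that $F(x,\cdot)$ is concave, which is immediate from \eqref{eq:F_formula}, together with a routine check that all $h$-arguments stay in $[0,1]$ on $[0,\de_{\max}]$. Therefore $\max_\de\eta$ is attained at $\de=0$ or $\de=\de_{\max}$.

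At $\de=0$ we have $t=1$ and $x_j=\al_jx$; since $g(\al,\la\al)=\al H(\la)$ and $F(x,1)=-H(x)/2$, this endpoint gives $\eta=(1-\tfrac d2)H(x)<0$. At $\de=\de_{\max}$ there are two cases. If $x\le\al_2$ then $\de_{\max}=\al_1x$, so $x_1=0$, $t=t_0=2r/d$ and $\eta=d\,F(x,t_0)+g(\al_2,x)$. If $x\ge\al_2$ then $\de_{\max}=\al_2(1-x)$, so $x_2=\al_2$ and, applying the symmetry of Proposition~\ref{prop:F_properties}(iv) (with $x'=1-x$, which forces $t'=2k/d$) together with $g(\al_1,x-\al_2)=g(\al_1,1-x)$, one gets $\eta=d\,F(1-x,\,2k/d)+g(\al_1,1-x)$. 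In both cases the quantity to bound has the shape
\[ \phi_{\be,\tau}(u)\defeq d\,F(u,\tau)+g(\be,u),\qquad u\in(0,\be], \]
with $(\be,\tau)\in\bigl\{(\al_2,\tfrac{2r}{d}),\,(\al_1,\tfrac{2k}{d})\bigr\}$; here $0<\be<\tau<1$ and $\tfrac{d\tau}2\in\{r,k\}\geq3$. So it remains to prove the one-variable inequality $\phi_{\be,\tau}(u)<0$ on $(0,\be]$ and to choose $x_-,x_+$ accordingly.

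Since the number of $uN$-subsets of a $\be N$-set is at most the number of $uN$-subsets of $\{1,\dots,N\}$, we have $g(\be,u)\le H(u)$, hence $\phi_{\be,\tau}(u)\le F_d(u,\tau)$; by Proposition~\ref{prop:F_properties}(iii) the right-hand side is negative for every $u$ below the threshold $u_\star=u_\star(\tau)$ at which $F_d(u_\star,\tau)=0$, which is positive since $\tfrac{d\tau}2>1$. Because we are outside the strong-condition regime we have $F_d(\be,\tau)\ge0$, which (again by (iii)) forces $u_\star\le\be$, so $[u_\star,\be]$ is a genuine interval. On it we write $\phi_{\be,\tau}(u)=F_d(u,\tau)-\bigl(H(u)-g(\be,u)\bigr)$ and use that the entropy gap $H(u)-g(\be,u)$ is increasing in $u$ (derivative $H'(u)-H'(u/\be)>0$) with value $H(\be)$ at $u=\be$, while $F_d(u,\tau)$ is controlled via the monotonicity of $F_d/H$; this reduces the claim on $[u_\star,\be]$ to a single explicit estimate for the closed-form function $\phi_{\be,\tau}$, which Section~\ref{sec:technical} verifies uniformly for all $d,k$ with $k>d/3$ and $k>d/2-2.6\log d$. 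Finally, $x_-,x_+$ satisfying \eqref{eq:x_minus_plus} exist because $F_d(x,2r/d)/H(x)\to1-r<0$ as $x\to0^+$ and $F_d(1-x,2k/d)/H(1-x)\to1-k<0$ as $x\to1^-$: any sufficiently small $x_-$ and any $x_+$ sufficiently close to $1$ work, and then $u_\star$ exceeds $x_-$, resp.\ $1-x_+$, so the two ranges above indeed cover all of $(0,\be]$.

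The main obstacle is precisely this last estimate. Once $F_d(\be,\tau)\ge0$ the soft bound $\phi_{\be,\tau}\le F_d$ breaks down on $[u_\star,\be]$, and one must genuinely exploit that $g(\be,u)$ vanishes at $u=\be$ — this is why $\phi_{\be,\tau}(\be)=d\,F(\be,\tau)<0$ holds strictly despite $F_d(\be,\tau)\ge0$ — and, quantitatively, that $g(\be,u)$ stays below $H(u)$ by a margin that grows to $H(\be)$. Making this margin beat $F_d(\be,\tau)$ uniformly in $d$ and $k$ is exactly what pins down the constant $2.6$ and what the computations of Section~\ref{sec:technical} are arranged to carry out.
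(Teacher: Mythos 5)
Your reduction of the problem is set up sensibly, and the endpoint computations are correct, but there is a fatal sign/direction error at the very heart of the argument.

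You correctly observe that $F(x,\cdot)$ is concave in $t$, that $t_\delta$ is affine in $\delta$, and that $g(\al_j,\cdot)$ is concave, so that $\delta\mapsto\eta_x(\delta)$ is concave on $[0,\delta_{\max}]$. But you then conclude ``Therefore $\max_\delta\eta$ is attained at $\delta=0$ or $\delta=\delta_{\max}$.'' That is backwards: a \emph{concave} function on a compact interval attains its \emph{minimum} at an endpoint, while its \emph{maximum} sits at an interior critical point in general. (The endpoint-maximum conclusion would be valid for a \emph{convex} function.) Since the lemma asks for an upper bound on $\eta$, you need the maximum, and concavity gives you no right to restrict attention to the two endpoints. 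Checking that $\eta_x(0)<0$ and $\eta_x(\delta_{\max})<0$ therefore does not imply $\eta_x<0$ on the whole segment $K_x$, and the rest of the argument collapses.

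In fact the paper's proof shows explicitly that the maximum is \emph{not} at an endpoint: after replacing the concave term $dF(x,t_y)$ by its tangent at $y=0$ (an upper bound, because the derivative $\partial_y\big(dF(x,t_y)\big)$ is largest at $y=0$), it differentiates the resulting surrogate and finds the maximizer $\tilde y$ as the positive root of the quadratic \eqref{eq:quadratic}, with $\tilde y>0$. The substance of the argument is then to show that $\tilde y\le 2^{-k/2}$ is so small that the correction terms $(k\log c_0)\tilde y + g(\al_1,\tilde y) + \big(g(\al_2,x-\tilde y)-g(\al_2,x)\big)$ in \eqref{eq:bound_with_yt} are dominated by the strictly negative endpoint value $\eta(0,x)=dF(x,t_0)+g(\al_2,x)\le -\frac{5r}{ed\sqrt d}$. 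Your one-variable function $\phi_{\be,\tau}(u)$ is precisely this endpoint value, which is indeed negative, but that is not where the maximum lives; the work of the proof is to control the (small but nonzero) interior bump above it. To repair your proposal you would need to add exactly this step: locate the critical point, bound its distance from the endpoint, and show the resulting correction cannot flip the sign.

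A secondary, smaller issue: the final paragraph about ``a single explicit estimate \ldots which Section~\ref{sec:technical} verifies'' is not a proof but a pointer, and as written it points to the wrong estimate — the paper does not verify $\phi_{\be,\tau}(u)<0$ on $[u_\star,\be]$ as a self-contained one-variable inequality; it verifies the stronger claim $\phi_{\be,\tau}(u)\le -\frac{5r}{ed\sqrt d}$ (a quantitative margin) precisely because it needs to absorb the $\tilde y$-correction. Your reduction loses that margin and so could not be completed even if the concavity step were right.
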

Here we show why this lemma completes the proof of Theorem~\ref{thm:main}. Since $\eta$ is continuous and the region defined by \eqref{eq:region} is compact, there exists $\de'>0$ such that $\eta(x_1,x_2) < - \de'$ everywhere in this region. For a given $N$, the number of possible profiles is $(Nd)^{\Oc(1)}$, and hence the expected number of sets $U$ of density in $[x_-,x_+]$ for which condition \eqref{eq:cond_U} fails is at most 
\[ (Nd)^{\Oc(1)} \exp\big( -N \de' \big) \to 0 
\quad \text{as } N \to \infty .\]
By Markov’s inequality, the proof of Theorem~\ref{thm:main} is complete.

Note that Lemma~\ref{lem:profile} fails to be true when $r=1,2$, and that is the reason why we need the condition $k<d/2-1$. We will rigorously prove the lemma for $r \geq 3$ in Section~\ref{sec:technical}. The proof strategy is that we will first check that $\eta(x_1,x_2)$ is negative in the special cases $x_1=0$ or $x_2=\al_2$. Then we will show that the maximum of $\eta$ over the region \eqref{eq:region} must be (very) close to one of these special cases. Using this we will conclude that the maximum must be negative as well.

\section{Technical computations} \label{sec:technical}

This section contains the technical parts of the proofs from preceding sections. 
\subsection{Properties of \textit{F}} 
\label{sec:F_properties}

We start with rigorously proving the properties of the function $F(x,t)$ listed in Proposition~\ref{prop:F_properties}. 

To see (i) we notice that the probabilities 
\[ p_{00} \defeq xt; \quad 
p_{01} = p_{10} \defeq x(1-t); \quad 
p_{11} \defeq 1-(2-t)x \]
define a distribution corresponding to 
$p_{ij} = \Pb(X=i \,\&\, Y=j)$; $i,j \in \{0,1\}$, where both marginals are distributed
as 
\[ p_0=x; \quad p_1=1-x .\]
Therefore the Shannon entropy 
\[ H(X,Y) = \sum_{i,j \in \{0,1\}} h\big( p_{ij} \big) = h\big( tx \big) 
+ 2h\big( (1-t)x \big) 
+ h\big( 1-(2-t)x \big) \]
 of the joint distribution of $X,Y$ is at most 
\[ H(X)+H(Y) = 2 H(x) ,\]
with equality if and only if $X$ and $Y$ are independent ($p_{00}=p_0^2$), that is, if $xt=x^2$. It follows that $F(x,t) \leq 0$ with equality if and only if $x=0$ or $x=t$.

For (ii) we need to differentiate $F$ w.r.t.~the second variable $t$:  
\begin{align*}
\partial_t F(x,t) &= \frac12 x \bigg( - \log(t x) + 2 \log( (1-t)x) - \log(1-(2-t)x) \bigg) \\
&= \frac12 x \log \frac{(1-t)^2 x}{t \big( 1-(2-t)x \big)} \\
&= \frac12 x \log \left( 1- \frac{t-x}{t \big( 1-(2-t)x \big)} \right) ,
\end{align*}
which is clearly negative for $t \in (x,1)$, implying monotonicity. We will not need concavity but it would follow easily by considering the second derivative.

As for (iii), first we use the identity $h(ab)=a \cdot h(b) + b \cdot h(a)$ to rewrite $F(x,t)$ as follows:
\begin{multline*}
F(x,t) = \frac12 x h(t) + \frac12 t h(x) 
+ \frac12 h\big( 1-(2-t)x \big) 
+ x h(1-t) + (1-t)h(x) -h(x) - h(1-x) \\
= -\frac12 t h(x) + \frac12 x h(t)
+ \frac12 h\big( 1-(2-t)x \big) + x h(1-t) - h(1-x) .
\end{multline*} 
Note that $x / h(x) \to 0$ as $x \to 0+$ and $h(1-z)\leq z$. So for any fixed $t \in (0,1)$ we have  
\[ \frac{F(x,t)}{h(x)} \to -\frac{t}{2} 
\quad \text{and} \quad 
\frac{H(x)}{h(x)} \to 1 
\quad \text{as } x \to 0+ .\]
We conclude that 
\begin{equation} \label{eq:t_half}
\lim_{x \to 0+} \frac{F(x,t)}{H(x)} = -\frac{t}{2} .
\end{equation} 
Now we fix $t \in (0,1)$ and a constant $c>0$ and consider the function 
\[ G(x) \defeq F(x,t) + c H(x) .\]
Differentiating (w.r.t.~$x$) gives:
\begin{multline*}
G'(x) = 
-\frac12 t \log(t x) - (1-t)\log( (1-t)x) + \frac12 (2-t)\log(1-(2-t)x) \\
- (1- c) \big( -\log(x) + \log(1-x) \big) \\ 
= \frac12 h(t)+h(1-t) + (1-t/2)\log(1-(2-t)x) 
+ \left(t/2-c \right) \log(x) 
- (1- c) \log(1-x) .
\end{multline*} 
Then the second derivative is
\begin{align*}
G''(x) &= 
\frac{-\frac12(2-t)^2}{1-(2-t)x} 
+ \left(\frac{t}{2} -c \right) \frac{1}{x} 
+ (1- c) \frac{1}{1-x} \\
&= \frac{ \left( \frac{t}{2} -c \right) 
- x(2-t) \left(\frac12-c\right) }
{x(1-x)\big( 1-(2-t)x \big)}.
\end{align*} 
Therefore, $G''(x) = 0$ if and only if $x$ is equal to 
\[ \hat{x} \defeq 
\frac{\frac{t}{2}-c}{\left( \frac{1}{2} -c \right) (2-t)} .\]
Therefore, if $0<c<t/2$, then $G(x)$ is convex on $(0,\hat{x})$ and concave on $(\hat{x}_t,t)$. Furthermore, on the one hand, due to \eqref{eq:t_half}, $G(x)$ is negative for small $x$. On the other hand, $G$ is positive at $x=t$ as $G(t)=F(t,t)+c F(t)=c F(t) >0$. It follows that $G$ has a unique root. In other words, $F(x,t)/H(x)=-c$ for exactly one $x \in (0,t)$. If $c> t/2$, then $G$ is concave on the entire $(0,t)$ and positive at both endpoints. In conclusion, $G$ is positive everywhere and hence $F(x,t)/H(x)=-c$ has no solution. 

We conclude that $x \mapsto F(x,t)/H(x)$ must be a bijection between $(0,t)$ and (-t/2,0), and the claim follows from the continuity of $F(x,t)/H(x)$.

Finally, (iv) simply follows from the following equalities:
\begin{align*}
xt &=1-(2-t')x' ;\\
x(1-t) &=x'(1-t') ;\\
1-(2-t)x &= x't'.
\end{align*}
%

\subsection{Estimating \textit{F}} 
\label{sec:F_estimates}

Here we give sharp estimates for $F(x,t)$ when $x$ and $t$ are close to $0$.

First of all, using power series we get 
\[ h(1-z)=(1-z)\log \frac{1}{1-z} 
= (1-z) \sum_{i=1}^{\infty} \frac{z^i}{i} 
= z - \sum_{i=2}^{\infty} \frac{z^i}{i(i-1)} .\]
We can conclude that 
\[ z-\frac{z^2}{2}-\frac{z^3}{4} \leq h(1-z) \leq z-\frac{z^2}{2}-\frac{z^3}{6} .\]
The upper bound holds for any $z \in [0,1]$, while the lower bound holds on the interval $z \in [0,0.6]$.

Simple manipulations show that 
\begin{equation*} 
\frac12 h\big( tx \big) 
+ h\big( (1-t)x \big) 
- h(x) 
= \frac12 xt \log\left(\frac{x}{t}\right) + x h(1-t)
\end{equation*}
Furthermore, we have the following bounds:
\begin{align*}
-h(1-x) & \leq = - x + \frac12 x^2 + \frac14 x^3 
\quad \text{if } x \in [0,0.6] ;\\
x h(1-t) & \leq xt - \frac12 xt^2 - \frac16 xt^3 ;\\
\frac12 h\big( 1-(2-t)x \big) & \leq 
\frac 12 (2-t)x - \frac14 (2-t)^2x^2 - \frac{1}{12}(2-t)^3 x^3 \\ 
&= x-\frac12 xt - x^2 + x^2t - \frac14 x^2t^2 - \frac{1}{12}(2-t)^3 x^3 .
\end{align*}
Putting these together, we get the following.
\begin{proposition} \label{prop:F_estimate}
For $0 \leq x \leq 0.6$ and $x \leq t \leq 1$ we have 
\begin{equation*} 
F(x,t) \leq 
\frac12 xt \left( 1-\frac{x}{t} + \log\frac{x}{t} \right) 
+ x^2t -\frac12xt^2 - \frac16 xt^3 
+ \frac14 \left( 1- \frac{(2-t)^3}{3} \right)x^3 . 
\end{equation*}
\end{proposition}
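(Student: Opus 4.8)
The plan is to assemble the claimed inequality directly from the three ingredients recorded just above the statement, so essentially no new idea is needed. First I would split
\[ F(x,t) = \Big( \tfrac12 h(tx) + h\big((1-t)x\big) - h(x) \Big) + \tfrac12 h\big(1-(2-t)x\big) - h(1-x) , \]
and replace the first bracket using the identity already noted (a consequence of $h(ab) = a\,h(b) + b\,h(a)$), namely
\[ \tfrac12 h(tx) + h\big((1-t)x\big) - h(x) = \tfrac12 xt \log\tfrac{x}{t} + x\,h(1-t) . \]
This isolates the single transcendental term $\tfrac12 xt\log(x/t)$; everything else becomes polynomial once the estimates are applied.

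Next I would substitute the three displayed bounds: $x\,h(1-t) \le xt - \tfrac12 xt^2 - \tfrac16 xt^3$ (the cubic upper bound for $h(1-z)$ at $z = t \in [0,1]$); $\tfrac12 h\big(1-(2-t)x\big) \le x - \tfrac12 xt - x^2 + x^2 t - \tfrac14 x^2 t^2 - \tfrac1{12}(2-t)^3 x^3$ (the same upper bound at $z = (2-t)x$, applicable because $(2-t)x \le (2-x)x \le 0.84 < 1$ on the stated domain, followed by the displayed expansion of $(2-t)^2$ and $(2-t)x$); and $-h(1-x) \le -x + \tfrac12 x^2 + \tfrac14 x^3$ (the cubic \emph{lower} bound for $h(1-z)$ at $z = x$, which is the only place the hypothesis $x \le 0.6$ enters).

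Then it is pure bookkeeping: add the four pieces and collect by monomial. The $x$-linear terms with no $t$ cancel; the remaining $x$-linear-in-$t$ contributions combine to $\tfrac12 xt$; the $x^2$ terms give $-\tfrac12 x^2 + x^2 t - \tfrac14 x^2 t^2$; the $x^3$ terms give $\tfrac14\big(1 - \tfrac{(2-t)^3}{3}\big)x^3$; and $-\tfrac12 xt^2$, $-\tfrac16 xt^3$ are carried unchanged. Since $\tfrac12 xt\big(1 - \tfrac{x}{t} + \log\tfrac{x}{t}\big) = \tfrac12 xt - \tfrac12 x^2 + \tfrac12 xt\log\tfrac{x}{t}$, the only discrepancy between the expression so obtained and the target is the extra summand $-\tfrac14 x^2 t^2 \le 0$ on our side, which may simply be dropped to reach the stated (slightly weaker) bound. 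There is no genuine obstacle; the only points requiring care are checking that each invocation of the cubic estimates respects its validity range — in particular that $(2-t)x$ never exceeds $1$, so that the upper bound for $h(1-z)$ applies — and tracking signs correctly while collecting terms.
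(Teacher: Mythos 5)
Your proposal is correct and follows essentially the same route as the paper: decompose $F(x,t)$ using the identity $\tfrac12 h(tx)+h((1-t)x)-h(x)=\tfrac12 xt\log(x/t)+xh(1-t)$, apply the cubic upper/lower bounds for $h(1-z)$ (the lower bound requiring $x\le 0.6$), and collect terms. Your bookkeeping is also right, including the observation that the sum actually produces the extra nonpositive term $-\tfrac14 x^2t^2$, which the paper silently discards to arrive at the stated bound.
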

For small $x,t$ the main term here is $\frac12 xt \,\varphi(x/t)$, where $\varphi(z) \defeq 1-z+\log z$. Note that $\varphi$ is negative and monotone increasing on $(0,1)$.
In the following regime $F(x,t)$ can actually be upper bounded by the main term.
\begin{proposition} \label{prop:F_main_term}
For $0 \leq x \leq 0.2$ and $t \geq 2x/(1+x)$ we have 
\begin{equation*} 
F(x,t) 
\leq \frac12 xt \, \varphi\left( \frac{x}{t} \right)
= \frac12 xt \left( 1-\frac{x}{t} + \log\frac{x}{t} \right) .
\end{equation*}
\end{proposition}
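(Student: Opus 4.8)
\emph{Plan of proof.} The idea is to feed Proposition~\ref{prop:F_estimate} into the desired inequality and check that the cubic‑order remainder it leaves behind is non‑positive on the whole region. The boundary case $x=0$ is degenerate (both sides are $0$ under the usual conventions), so fix $0<x\le 0.2$. Since $2/(1+x)\ge 1$ we have $t\ge 2x/(1+x)\ge x$, and $t\le 1$ because $F$ is only defined for $t\in[0,1]$; thus Proposition~\ref{prop:F_estimate} applies and gives
\[ F(x,t)\le \tfrac12 xt\,\varphi\big(\tfrac{x}{t}\big)+x\cdot B(x,t),\quad\text{where}\quad B(x,t)\defeq xt-\tfrac12 t^2-\tfrac16 t^3+\tfrac14 x^2-\tfrac{(2-t)^3}{12}x^2. \]
So it is enough to prove $B(x,t)\le 0$ for all $0<x\le 0.2$ and $2x/(1+x)\le t\le 1$.

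First I would show that, for fixed $x$, the map $t\mapsto B(x,t)$ is decreasing on $[2x/(1+x),1]$. One computes $\partial_t B(x,t)=x-t-\tfrac12 t^2+\tfrac14 x^2(2-t)^2$ and $\partial_t^2 B(x,t)=-1-t-\tfrac12 x^2(2-t)<0$, so $\partial_t B(x,\cdot)$ is decreasing; and at the left endpoint $t_\ast\defeq 2x/(1+x)$, where $2-t_\ast=2/(1+x)$, a short calculation yields $\partial_t B(x,t_\ast)=x(x^2-x-1)/(1+x)^2\le 0$, since $x^2-x-1<0$ on $[0,0.2]$. Hence $\partial_t B(x,t)\le 0$ throughout $[t_\ast,1]$ and $B(x,t)\le B(x,t_\ast)$ there.

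It then remains to verify $B(x,t_\ast)\le 0$, which is a one‑variable inequality. Plugging $t_\ast=2x/(1+x)$ into $B$ and writing $y\defeq 1+x\in[1,1.2]$, the expression collapses to $B(x,t_\ast)=\tfrac{x^2}{12y^3}\,p(y)$ with $p(y)\defeq 3y^3+24y^2-40y+8$. Since $p'(y)=9y^2+48y-40$ is increasing with $p'(1)=17>0$, the polynomial $p$ is increasing on $[1,1.2]$, so $p(y)\le p(1.2)=-0.256<0$ there; hence $B(x,t_\ast)<0$. Together with the previous step this gives $B(x,t)\le 0$ on the whole region, and the proposition follows.

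The work is largely bookkeeping: correctly carrying out the two $t$‑derivatives of $B$ and the simplification of $B(x,t_\ast)$ into the cubic $p(y)$. The one genuinely delicate point is the final numerical inequality $p(y)\le 0$ on $[1,1.2]$ — it is true, but with only a thin margin at $y=1.2$, so the threshold $0.2$ in the hypothesis is essentially what this method can afford; and one really does need the monotonicity reduction of the second step, since plugging cruder bounds (such as $x\le t/(2-t)$) directly into the remainder of Proposition~\ref{prop:F_estimate} is not enough.
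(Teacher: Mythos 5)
Your proof is correct and follows essentially the same two-step structure as the paper's: reduce via Proposition~\ref{prop:F_estimate} to showing a cubic remainder is $\le 0$, then verify the inequality at the endpoint $t_0 = 2x/(1+x)$ and propagate it to $t\ge t_0$ by monotonicity in $t$. I checked your algebra: $\partial_t B = x - t - \tfrac12 t^2 + \tfrac14 x^2(2-t)^2$ and $\partial_t^2 B = -1 - t - \tfrac12 x^2(2-t)<0$ are right, $\partial_t B(x,t_\ast) = x(x^2-x-1)/(1+x)^2 \le 0$ is right, and the simplification $B(x,t_\ast) = \tfrac{x^2}{12(1+x)^3}\bigl(3y^3+24y^2-40y+8\bigr)$ with $y=1+x$ is correct, as is the evaluation $p(1.2)=-0.256$.

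The one genuine divergence from the paper is in how the endpoint case is closed out. The paper does not simplify $B(x,t_0)$ into a single polynomial; instead it bounds the two pieces of the remainder separately, showing the $\tfrac16 xt_0(\cdots)$ term is $\le \tfrac23 x^4$ (eq.~\eqref{eq:extra_term1}) and the $\tfrac14(1-\tfrac{(2-t_0)^3}{3})x^3$ term is $\le -\tfrac23 x^4$ (eq.~\eqref{eq:extra_term2}, using the ad hoc inequality $1/(1+x)^3 > x + 3/8$ for $x\le 0.2$). Your collapse into the cubic $p(y)$ and the observation that $p$ is increasing with $p(1.2)<0$ is arguably tidier and makes the sharpness of the threshold $x\le 0.2$ more visible (you correctly note $p$ changes sign just past $y=1.2$). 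A small bonus of your write-up: you make the monotonicity of $\partial_t B$ explicit via the second derivative and then check the sign only at $t_\ast$, which repairs a typo in the paper (the published derivative formula reads $x-2t-\cdots$ where it should be $x-t-\cdots$; the claim itself is unaffected).
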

\begin{proof}
Due to Proposition~\ref{prop:F_estimate}, we need to show that the remaining part of the sum is at most $0$, that is:
\begin{equation} \label{eq:extra_terms}
\frac16xt \bigg( 3(2x-t) - t^2 \bigg) 
+ \frac14 \left( 1- \frac{(2-t)^3}{3} \right)x^3 \leq 0. 
\end{equation}
Let 
\[ t_0 \defeq \frac{2x}{1+x} .\]
First we show that \eqref{eq:extra_terms} holds at $t=t_0$. We have 
\[ 2x-t_0= \frac{2x^2}{1+x} 
\text{, and hence } 
3(2x-t_0)-t_0^2 = \frac{6x^2(1+x)-4x^2}{(1+x)^2} 
= \frac{2x^2(1+3x)}{(1+x)^2}
\]
We get that 
\begin{equation} \label{eq:extra_term1}
\frac16xt_0 \bigg( 3(2x-t_0) - t_0^2 \bigg) 
= \frac{2x^4(1+3x)}{3(1+x)^3} 
\leq \frac{2x^4(1+3x)}{3(1+3x)} 
= \frac23 x^4 .
\end{equation}
It is easy to check that   
\[ \frac{1}{(1+x)^3} > x + \frac38 
\text{ for } 0 \leq x \leq 0.2 ,\]
and hence 
\begin{equation} \label{eq:extra_term2}
\frac14 \left( 1- \frac{(2-t_0)^3}{3} \right)x^3 
= \frac14 \left( 1- \frac{8/(1+x)^3}{3} \right)x^3 
\leq \frac14 \left( 1- \frac{8x+3}{3} \right)x^3 
= - \frac23 x^4 .
\end{equation}
Therefore, \eqref{eq:extra_term1} and \eqref{eq:extra_term2} give \eqref{eq:extra_terms} for $t=t_0$. It can be seen easily that the derivative of \eqref{eq:extra_terms} w.r.t.~$t$ is
 \[ x \left( x-2t-\frac12 t^2 + \frac14 x^2(2-t)^2 \right) ,\]
which is negative on $[t_0,1]$ for $x \leq 0.2$. Thus \eqref{eq:extra_terms} indeed holds for every $t \geq t_0$.
\end{proof}

\subsection{The strong condition}
\label{sec:strong}

Now we prove that condition \eqref{eq:strong_cond} of Theorem~\ref{thm:strong_cond} is satisfied if $k<d/3$ or $k<d/2-2.6 \log d$. This condition can be checked easily for specific values of $d$ and $k$: we simply need to evaluate $F(x_0,t_0)/H(x_0)$ at $x_0=r/(2k)$ and $t_0=(d-2k+r)/d$, and check if it is less than $-1/d$. The next tables show the values $k_d^{\textrm{sc}}$ up to which the strong condition holds. 

\medskip

\begin{tabular}{l||r|r|r|r|r|r|r|r|r|r|r|r|r|r|r|r|r} 
$d$                 & 13 & 14 & 15 & 16 & 17 & 18 & 19 & 20 & 21 & 22 & 23 & 24 & 25 & 26 & 27 & 28 & 29 \\
\hline
$k_d^{\textrm{sc}}$ & 3  &  4 &  4 &  4 &  5 &  5 &  5 &  6 &  6 &  7 &  7 &  7 &  8 &  8 &  9 &  9 & 10
\end{tabular}

\medskip

\begin{tabular}{l||r|r|r|r|r|r|r|r|r|r|r|r|r|r|r|r|r} 
$d$                 & 30 & 40 & 50 & 60 & 70 & 80 & 90 & 100 & 110 & 120 & 130 & 140 & 150 & 160 & 500 \\
\hline
$k_d^{\textrm{sc}}$ & 10 & 14 & 19 & 24 & 28 & 33 & 38 &  42 &  47 &  52 &  57 &  62 &  67 &  71 & 239
\end{tabular}

\medskip

One can quickly check with a computer that for $d \leq 500$ the strong condition indeed holds provided that $k<\max( d/3, d/2 - 2.6 \log d)$. Therefore, in what follows we may assume that $d>500$ whenever it is needed.

Recall that $d=2sk+r$ and 
\[ \be 
= \left\{ \frac{d}{2k} \right\} 
= \frac{r}{2k} .\]

We start with the case $k<d/4$. Then $s \geq 2$, and hence 
\[ \frac{d-2k+r}{d}
= 1-\frac{2k-r}{2sk+r} 
= 1-\frac{2k(1-\be)}{2k(s+\be)} 
= 1-\frac{1-\be}{s+\be} 
\geq 1-\frac{1-\be}{2+\be} 
= \frac{1+2\be}{2+\be} .\]
According to Proposition~\ref{prop:F_properties}(ii) $F(x,t)$ is monotone decreasing in $t$, and hence 
\[ F\left( \frac{r}{2k}, \frac{d-2k+r}{d} \right) 
\leq F\left( \be, \frac{1+2\be}{2+\be} \right) .\]
Therefore, it suffices to prove that  
\[ F\left( \be, \frac{1+2\be}{2+\be} \right) \bigg/ H(\be)
< -\frac{1}{d} .\]
In fact, one can easily check that this fraction is less than $-1/9$ on the entire interval $\be \in (0,1]$; see Figure~\ref{fig:strong_cond_plot}. Note that $d=2sk+r \geq 9$ because $k \geq 2$ and $r \geq 1$. Thus $-\frac{1}{9} \leq -\frac{1}{d}$, and we are done.

\begin{figure}
\begin{center}
\includegraphics[width=10cm]{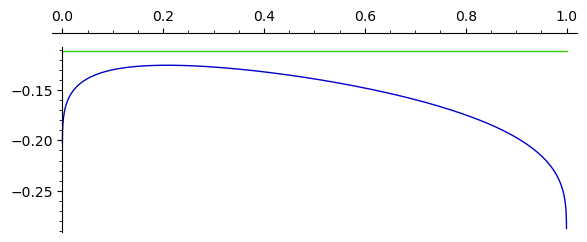}
\end{center}
\caption{The function $\ds F\left( \be, \frac{1+2\be}{2+\be} \right) \bigg/ H(\be)$ compared to $\ds -\frac19$, proving that the strong condition holds whenever $\ds k<\frac{d}{4}$}
\label{fig:strong_cond_plot}
\end{figure}

Now we turn to the case $d \geq k/4$, when we have $s=1$, $d=2k+r$, and hence 
\[ \frac{d-2k+r}{d} = \frac{2r}{2k+r} = \frac{2\be}{1+\be} .\]
So in this case we need to consider the fraction  
\[ \Ga(\be) \defeq 
F\left( \be, \frac{2\be}{1+\be} \right) \bigg/ H(\be) . \] 
This turns out to be strictly monotone decreasing in $\be$, with $\lim_{\be \to 0+} \Ga(\be) = 0$. In fact, we could easily deduce from this property alone that condition \eqref{eq:strong_cond} holds if $k \leq \big( \frac12 - o_d(1) \big) d$. In order to get a more precise threshold, we will use our estimates for $F(x,t)$.

Beforehand, note that 
\begin{align*}
\text{if } k \leq \frac{d}{3} \text{, then } 
\be &\geq 0.5 \text{, hence } 
\Ga(\be) \leq \Ga(0.5) \approx -0.040852 < -\frac{1}{25}
\text{; and} \\
\text{if } k \leq \frac{5d}{11}  \text{, then } 
\be & \geq 0.1 \text{, hence } 
\Ga(\be) \leq \Ga(0.1) \approx -0.005378 < -\frac{1}{186}
\text{.} 
\end{align*}

It follows that if $k \leq d/3$, then the strong condition holds provided that $d \geq 25$. We are also done if $d \leq 186$ and $\be \geq 0.1$. As we pointed out, we may assume that $d>500$ so we will also assume that $\be < 0.1$.

Using Proposition~\ref{prop:F_main_term} with $x=\be < 0.1$ and $t=2\be/(1+\be)$ we get the following estimate:
\begin{equation*}
F\left( \be, \frac{2\be}{1+\be} \right) \leq 
\frac{\be^2}{1+\be} \left( 1- \frac{1+\be}{2} + 
\underbrace{\log(1+\be)}_{\leq \be} 
- \log 2 \right) 
\leq \frac{\be^2}{1+\be} \left( 
\frac12 - \log 2 + \frac{\be}{2} \right) .
\end{equation*}
Therefore, using $h(1-\be) \leq \be$, we get that 
\begin{equation} \label{eq:dFplusH}
d \cdot F\left( \be, \frac{2\be}{1+\be} \right) + 
\underbrace{h(\be) + h(1-\be)}_{=H(\be)} 
\leq \frac{d\be^2}{1+\be} \left( \frac12 - \log 2 + \frac{\be}{2}  \right) + \be \log \frac{1}{\be} + \be .
\end{equation}
We need to see that this is negative. First we consider the case $r \geq 7 \log d$. Since $\be <0.1$, we have 
\[ \frac12 - \log 2 + \frac{\be}{2} 
< 0.5- \log 2 + 0.05 <-1/7 .\]
Also note that 
\[ \frac{\be}{1+\be} = \frac{2k \be}{2k(1+\be)} = 
\frac{r}{2k+r} = \frac{r}{d} .\]
It follows from \eqref{eq:dFplusH} that 
\begin{multline*}
d \cdot F\left( \be, \frac{2\be}{1+\be} \right) + H(\be) 
< \be\bigg( -r/7 + \log \frac{1}{\be} + 1 \bigg) 
= \be\bigg( -r/7 + \log \frac{d-r}{r} + 1 \bigg) \\
< \be\bigg( -r/7 + \log \frac{d}{r} + 1 \bigg) 
= \be \big( -r/7 + \log d \big) + \be\big( 1- \log r \big) ,
\end{multline*}
where both terms are negative provided that $r \geq 7 \log d$.

Finally, we show our strongest threshold: let 
\[ C_0 \defeq \frac{1}{\log 2 - 1/2} \approx 5.1774 
\text{ so that } 
C_0\left(\frac12 - \log 2\right)+1 = 0 .\]
Now choose $C$ such that $r=C \log d$ and assume that $C_0 \leq C \leq 7$. Then 
\begin{align*}
\frac{d\be}{1+\be} &= r = C \log d; \\ 
\log \frac{1}{\be} + 1 &< \log \frac{d}{r} + 1 
= \log \frac{d}{C \log d} + 1 
= \log d - \log \log d 
+ \underbrace{1 - \log C}_{\leq 1-\log C_0} .
\end{align*}
Substituting these into \eqref{eq:dFplusH} we get 
\begin{multline*}
d \cdot F\left( \be, \frac{2\be}{1+\be} \right) + H(\be) 
< \be C \log d \left( \frac12 - \log 2 + \frac{\be}{2}  \right) + \be \left( \log d - \log \log d + 1 - \log C_0 \right) \\
= \be\log d \bigg( C \big( 1/2 - \log 2 \big) - 1 \bigg) 
+ \frac12 \be^2 C \log d 
+ \be \bigg(- \log \log d + 1 - \log C_0 \bigg) .
\end{multline*}
Here the first term is non-positive since $C \geq C_0$, while the remaining part is negative because, using $C \leq 7$, we have
\[ \frac12 \be C \log d 
< \frac{7}{2} \frac{7 (\log d)^2}{d-7 \log d} 
< \log\log d - 1 + \log C_0 ,\]
where the last inequality can be checked to hold for $d \geq 405$.

\subsection{The weak condition}
\label{sec:weak}

Here we prove the last missing ingredient, Lemma~\ref{lem:profile}.

Suppose that we have $x_-,x_+$ such that \eqref{eq:x_minus_plus} holds. It follows from Proposition~\ref{prop:F_properties}(iii) that \eqref{eq:x_minus_plus} is a monotone property in the sense that if it holds for some $x_-$ and $x_+$, then it also holds for any smaller $x_-$ and any larger $x_+$. For specific values of $d,k$ it is easy to find suitable $x_-,x_+$ with a computer. 

Furthermore, let $K$ denote the set of $(x_1,x_2)$ satisfying \eqref{eq:region}. Note that $K$ is compact. The ``$t$-value'' corresponding to a point $(x_1,x_2) \in K$ is 
\[ t(x_1,x_2) \defeq 
\frac{2r}{d} + \frac{2kx_1}{d(x_1+x_2)} .\]
Using this notation \eqref{eq:eta} turns into 
\[ \eta(x_1,x_2) \defeq 
d \cdot F\bigg( x_1+x_2, t(x_1,x_2) \bigg)
+g(\al_1,x_1)+g(\al_2,x_2) .\]
The statement of the lemma is that $\eta$ is negative over $K$. Note that the definition of $K$ involves the condition $x_1/x_2 \leq \al_1 / \al_2$, which ensures that $t(x_1,x_2) \leq 1$.

For a fixed $x \in [x_-,x_+]$ we define 
\[ K_x \defeq \big\{ (x_1,x_2)\in K \, : \, x_1+x_2=x \big\} .\]

\medskip

\noindent\textbf{First case:} $x \leq \al_2$. 
We parameterize the points $(x_1,x_2) \in K_x$ using a variable $y \geq 0$:
\[ x_1=y 
\quad \text{and} \quad 
x_2=x-y .
\]
The $t$-value corresponding to the point $(y,x-y)$ is
\[ t_y \defeq t(y,x-y) = 
\underbrace{\frac{2r}{d}}_{t_0} + \frac{2k y}{dx} .\]
Then, for any fixed $x$, we introduce the following one-variable variant of $\eta$:
\begin{equation} \label{eq:eta_x}
\eta_x(y) \defeq \eta(y,x-y) = 
d \cdot F(x,t_y )+g(\al_1,y)+g(\al_2,x-y) .
\end{equation}
Using the formula for $\partial_t F (x,t)$, see Proposition~\ref{prop:F_properties}(ii), we get that 
\[ \partial_y \big( d \cdot F(x,t_y) \big) = 
\underbrace{d \frac{2k}{dx} \frac{x}{2}}_{=k} 
\log \left( 1- \frac{t_y-x}{t_y \big( 1-(2-t_y)x \big)} \right). \]
It is easy to see that the term 
\[ \frac{t_y-x}{t_y \big( 1-(2-t_y)x \big)} 
= \frac{\frac1x-\frac1{t_y}}{\frac1x-(2-t_y)} \]
is monotone increasing in $t_y$ provided that $x \leq t_y \leq 1$, which holds now as $t_y \geq t_0 \geq \al_2 \geq x$. Therefore, we have the largest derivative at $y=0$, that is 
\[ \partial_y \big( d \cdot F(x,t_y) \big) 
\leq k \log c_0 
\text{, where} \quad 
c_0 \defeq 1-\frac{t_0-x}{t_0 \big( 1-(2-t_0)x \big)} < 1 .\]
Consequently, 
\[ \eta_x(y) \leq d \cdot F(x,t_0) + 
(k \log c_0 ) y + g(\al_1,y) + g(\al_2,x-y) .\]
To find out the maximum of the right-hand side, we differentiate it w.r.t.~$y$: 
\[ k \log c_0 
+ \log \frac{(\al_1-y)(x-y)}{y(\al_2-x+y)} ,\]
where we used that 
\[ \partial_x g(\al,x)= -\log(x) + \log(\al-x) .\]
So the maximum is attained at the unique positive solution $\yt$ of the quadratic equation 
\begin{equation} \label{eq:quadratic}
y(\al_2-x+y) = c_0^k (\al_1-y)(x-y) .
\end{equation}
Specifically, we have $Ay^2+By+C=0$ with 
\[ A=1-c_0^k > 0; \, 
B=(\al_2-x)+c_0^k (\al_1+x)>0; \, 
C=  - c_0^k \al_1 x < 0 .\]
So one may use the quadratic formula to express $\yt$ explicitly. We omit this but we conclude that  
\begin{equation} \label{eq:bound_with_yt}
\max_{(x_1,x_2) \in K_x} \eta(x_1,x_2) 
= \max_{y} \eta_x(y) \leq d \cdot F(x,t_0) + 
(k \log c_0 ) \yt + g(\al_1,\yt) + g(\al_2,x-\yt) ,
\end{equation}
where $c_0$ and $\yt$ can be explicitly expressed in terms of $x$. Hence, the resulting upper bound is a concrete function of $x$. For any given pair $d,k$, using a computer it is easy to (first find a suitable $x_-$ and then) check that the afore-mentioned function is negative on $[x_-,\al_2]$. In the range $d \leq 500$ we verified this for all $d,k$ for which the strong condition fails, i.e., when $k_d^{\textrm{sc}} < k < d/2-1$. For instance, for the pair $d=99$, $k=48$ ($r=3$) we may choose $x_-=0.002$ and we get the following plot\footnote{For $r=2$ this plot goes above zero at some point, and this is where our argument fails for $k=d/2-1$.} for our upper bound as a function of $x \in [x_-,\al_2]$:
\begin{center}
\includegraphics[width=10cm]{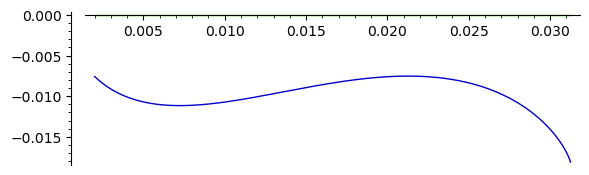}
\end{center}

We now prove that this upper bound function is negative in the range $d>500$ as well. We can still assume that the strong condition does not hold. In particular, $k>d/3$ and $r \leq 5.2 \log d < d/11$, where the second inequality is true for any $d\geq 333$. Therefore, $r/d < 1/11$, and hence  
\[ \be \defeq \al_2 = \frac{r}{2k} = \frac{r}{d-r} < 0.1 .\] 
So from this point on we assume that $\be=\al_2<0.1$.

\begin{claim*}
The choice
\[ x_-= \frac{2}{e} \frac{r}{d\sqrt{d}} .\]
satisfies \eqref{eq:x_minus_plus}.
\end{claim*}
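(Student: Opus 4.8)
The plan is to establish the binding half of \eqref{eq:x_minus_plus}, namely $F_d(x_-,t_0)<0$ for $x_-=\frac{2}{e}\cdot\frac{r}{d\sqrt d}$ and $t_0=\frac{2r}{d}$. The other half, $F_d(1-x_+,t_0')<0$ with $t_0'=\frac{2k}{d}$, I would dispose of in one line: by \eqref{eq:t_half} one has $F(y,t_0')/H(y)\to -t_0'/2=-k/d$ as $y\to0^+$, and $-k/d\le-1/3<-1/d$ in the regime at hand (where $k\ge d/3$), so $F_d(1-x_+,t_0')<0$ for every $x_+$ sufficiently close to $1$; by the monotonicity of \eqref{eq:x_minus_plus} noted earlier such an $x_+$ is admissible. (One may in fact take $1-x_+=\frac{2}{e}\cdot\frac{r}{d\sqrt d}$ as well: since $t_0'$ is much larger than $t_0$, the estimate below is then even more comfortable.)

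For the main point, the virtue of the constant $\frac2e$ is that $x_-/t_0=\frac{1}{e\sqrt d}$. First I would check that $(x_-,t_0)$ lies in the range of Proposition~\ref{prop:F_main_term} (and of the sharper Proposition~\ref{prop:F_estimate}): $x_-=\frac{2r}{e d^{3/2}}$ is well below $0.2$, and $t_0=\frac{2r}{d}\ge 2x_-$ because $\sqrt d\ge 2/e$, hence $t_0\ge 2x_-/(1+x_-)$. Then Proposition~\ref{prop:F_main_term} gives $F(x_-,t_0)\le \frac12 x_- t_0\,\varphi\big(\tfrac1{e\sqrt d}\big)$, and since $\varphi\big(\tfrac1{e\sqrt d}\big)=1-\tfrac1{e\sqrt d}+\log\tfrac1{e\sqrt d}=-\tfrac12\log d-\tfrac1{e\sqrt d}$ — the ``$1$'' cancelling exactly because of the factor $\frac2e$ — this reads $F(x_-,t_0)\le -\tfrac14 x_- t_0\log d-\tfrac{x_- t_0}{2e\sqrt d}$. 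On the other side I would bound $H$ by $h(1-x_-)\le x_-$, which together with $\log(1/x_-)=1+\tfrac32\log d-\log(2r)$ gives $H(x_-)\le -x_-\log x_-+x_-=x_-\big(2+\tfrac32\log d-\log(2r)\big)$.

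Assembling, and using $d t_0=2r$ together with the partial cancellation of the two $\log d$ terms ($-\tfrac r2\log d$ coming from $-\tfrac14 d x_- t_0\log d$ against $+\tfrac32\log d$ coming from $H$), one gets $dF(x_-,t_0)+H(x_-)\le x_-\big[\tfrac{3-r}{2}\log d+2-\log(2r)-\tfrac{r}{e\sqrt d}\big]$. For $r\ge 4$ this is at once negative: the coefficient $3-r\le-1$ of $\log d$ is negative and $2-\log(2r)\le 2-\log 8<0$, so the bracket is at most $-\tfrac12\log d<0$ for every $d$.

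I expect the whole difficulty to be concentrated in the single remaining case $r=3$, and this is the main obstacle: there the $\log d$ contribution cancels identically and the crude bracket above leaves the \emph{positive} constant $2-\log 6>0$, so it does not suffice. To close this case I would rerun the estimate with the sharper Proposition~\ref{prop:F_estimate} in place of Proposition~\ref{prop:F_main_term} (retaining its $-\tfrac12 x_- t_0^2$ and $-\tfrac12 x_-^2$ contributions rather than discarding them), use the two-term bound $h(1-x_-)\le x_--\tfrac12 x_-^2$ for $H$, and invoke the standing hypotheses $d>500$ and $r\le 5.2\log d$ of Section~\ref{sec:strong} (the range $d\le500$ being settled by direct computation). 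The delicate bookkeeping of the competing contributions of orders $d^{-3/2}$, $d^{-2}$ and $(\log d)\,d^{-5/2}$ is where essentially all the work lies, and it is conceivable that $r=3$ ultimately requires a marginally smaller choice of $x_-$ than $\frac{2}{e}\cdot\frac{r}{d\sqrt d}$.
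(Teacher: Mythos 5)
Your approach is the paper's own, and your bookkeeping is in fact the correct one. The paper records $\log\tfrac{1}{x_-}+1 = 1+(1+a)\log d - \log 2 - \log r$; but since $x_- = \tfrac{2r}{e\,d^{1+a}}$ one has $\log\tfrac{1}{x_-} = 1+(1+a)\log d - \log(2r)$, so $\log\tfrac{1}{x_-}+1 = 2+(1+a)\log d-\log(2r)$ --- the paper has dropped a $+1$. With the corrected value, the assembled bound is exactly what you wrote, $dF(x_-,t_0)+H(x_-)\le x_-\bigl[\tfrac{3-r}{2}\log d + 2 - \log(2r) - \tfrac{r}{e\sqrt d}\bigr]$, and your diagnosis is right: for $r\ge4$ the bracket is $\le -\tfrac12\log d<0$, while for $r=3$ the $\log d$ terms cancel and one is left with the residue $2-\log 6 - \tfrac{3}{e\sqrt d}$, which is positive as soon as $d>28$.

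This is not a looseness in the estimate: evaluating $F_d(x_-,t_0)$ directly at $r=3$, $d=501$ gives a value $\approx +2\times10^{-5}>0$, so the claim as stated is actually false in the single case $r=3$ throughout the regime $d>500$ where it is invoked; the paper's arithmetic slip is what hides this. Your hope that Proposition~\ref{prop:F_estimate} might rescue the $r=3$ case cannot be realized: the terms it adds beyond the main one are of orders $x_-t_0^2$, $x_-^2 t_0$ and $x_-^3$, all $O(d^{-7/2})$ here, which is negligible against the deficit $x_-(2-\log 6)=\Theta(d^{-3/2})$. Your other suggestion --- that $r=3$ may simply require a slightly smaller $x_-$ --- is the genuine fix. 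Replacing $\tfrac2e$ by $\tfrac{2}{e^2}$ makes $x_-/t_0 = \tfrac{1}{e^2\sqrt d}$, hence $\varphi(x_-/t_0)=-1-\tfrac12\log d-O(d^{-1/2})$, whose extra $-r\,x_-$ dominates the extra $+x_-$ that now appears in $\log(1/x_-)+1 = 3+\tfrac32\log d-\log(2r)$; the $r=3$ residue becomes $x_-\bigl(-\log 6 - O(d^{-1/2})\bigr)<0$. (The only downstream use of the particular value of $x_-$ is in the $-\tfrac{5r}{ed\sqrt d}$ bound of a later claim and the closing numerical comparison, both of which have ample slack to absorb the extra factor of $e$ when $d>500$.)
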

\begin{proof}
Let $a=1/2$. Then we have 
\[ x_-= \frac{2}{e} \frac{r}{d^{1+a}} 
\quad \text{and} \quad 
\frac{x_-}{t_0} = \frac1{e d^a} .
\]
Using Proposition~\ref{prop:F_main_term} we get 
\[ F(x_-,t_0)
\leq \frac12 x_- t_0 \left( 1- \frac{1}{e d^a} - \log e - a \log d \right) 
< -\frac12 x_- t_0 a \log d. \]
Furthermore, 
\[ H(x_-) 
\leq x_-\left( \log \frac{1}{x_-} + 1 \right) 
= x_- \bigg( 1 + (1+a) \log d - \log 2 - \log r \bigg) 
< x_- (1+a) \log d.\]
Therefore, using $d t_0/2=r$, we get 
\[ F_d(x_-,t_0) = d \cdot F_d(x_-,t_0) + H(x_-) 
< -\big( ra - (1+a) \big) x_- \log d \leq 0 \]
provided that $a \geq  1/(r-1)$, which is true for our choice $a=1/2$ because $r \geq 3$. 
\end{proof}
\begin{claim*}
We have $\ds c_0 \leq \frac12$.
\end{claim*}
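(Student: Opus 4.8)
The plan is to turn the inequality $c_0 \leq \frac12$ into an explicit inequality relating $x$ and $t_0$, and then to exploit that on the relevant range $x \leq \al_2$ one has the clean identity $\al_2 = t_0/(2-t_0)$.

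First I would record the positivity facts that make the manipulations legitimate. We are in the first case, so $0 < x \leq \al_2$, and $t_0 = 2r/d < 1$ (indeed $r < 2k$, in fact $r \leq k$, forces $t_0 = 2r/(2k+r) \leq \frac23$). Since $\al_2 = \be = r/(2k) = r/(d-r)$ and $t_0 = 2r/d$, a one-line computation gives $\al_2/t_0 = d/\big(2(d-r)\big) = 1/(2-t_0)$, i.e.\ $\al_2 = t_0/(2-t_0)$; hence $(2-t_0)x \leq (2-t_0)\al_2 = t_0 < 1$, so $1-(2-t_0)x \geq 1-t_0 > 0$, and similarly $t_0 - x \geq t_0 - \al_2 > 0$. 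In particular both the numerator $t_0-x$ and the denominator $t_0\big(1-(2-t_0)x\big)$ appearing in the definition of $c_0$ are positive.

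Next I would combine the two terms in $c_0 = 1 - \frac{t_0-x}{t_0(1-(2-t_0)x)}$ over a common denominator to get
\[ c_0 = \frac{x\big(1 - t_0(2-t_0)\big)}{t_0\big(1-(2-t_0)x\big)} , \]
so that, cross-multiplying by the positive denominator, $c_0 \leq \frac12$ is equivalent to $2x\big(1 - t_0(2-t_0)\big) \leq t_0\big(1-(2-t_0)x\big)$, which after cancellation is simply $x\,(2 - 2t_0 + t_0^2) \leq t_0$, i.e.\
\[ x \leq \frac{t_0}{2 - 2t_0 + t_0^2} . \]
Finally, using $x \leq \al_2 = t_0/(2-t_0)$, it suffices to check $\frac{t_0}{2-t_0} \leq \frac{t_0}{2-2t_0+t_0^2}$; dividing by $t_0 > 0$ and comparing the (manifestly positive) denominators, this reduces to $2-2t_0+t_0^2 \leq 2-t_0$, i.e.\ $t_0^2 \leq t_0$, which holds since $0 \leq t_0 \leq 1$. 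This proves $c_0 \leq \frac12$.

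There is no real obstacle here; the only point requiring a little care is ensuring that all quantities cross-multiplied are positive, which is exactly what the bound $x \leq \al_2$ together with $t_0 < 1$ provides. As a byproduct the computation shows that $c_0$ is increasing in $x$ for fixed $t_0$ and equals exactly $\frac{1-t_0}{2-t_0} \leq \frac12$ at the endpoint $x = \al_2$, which gives an alternative phrasing of the same argument.
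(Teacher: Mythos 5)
Your proof is correct and follows essentially the same route as the paper's: reduce $c_0 \leq \frac12$ to the polynomial inequality $x\,(2-t_0(2-t_0)) \leq t_0$, observe that $x=\al_2$ is the extremal case since $t_0$ does not depend on $x$, and then verify at $x=\al_2$ using the identity $\al_2 = t_0/(2-t_0)$. Your endpoint check (reducing to $t_0^2\le t_0$) is a slightly cleaner phrasing than the paper's direct substitution $t_0 = 2\be/(1+\be)$, but the argument is the same.
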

\begin{proof}
We need to show that 
\[ \frac{t_0-x}{t_0 \big( 1-(2-t_0)x \big)} \geq \frac12 ,\]
which is equivalent to 
\begin{equation} \label{eq:c0_cond}
\frac{t_0}{x} \geq 2- t_0(2-t_0) .
\end{equation}
Since $t_0$ does not depend on $x$, it suffices to prove \eqref{eq:c0_cond} in the worst case $x=\al_2=\beta$, when we get 
\[ \frac{2}{1+\be} 
\geq 2 - \frac{2\be}{1+\be} \frac{2}{1+\be} ,\]
which can be checked to be true for any $0 \leq \be \leq 1$.
\end{proof}

\begin{claim*}
We have $\yt \leq 2^{-k/2}$.
\end{claim*}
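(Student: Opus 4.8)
\emph{Proof strategy for the claim $\yt\le 2^{-k/2}$.}

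The plan is to avoid writing out the quadratic formula for $\yt$ and instead exploit the sign structure of $p(y)\defeq Ay^2+By+C$ from \eqref{eq:quadratic}. Since $A=1-c_0^k>0$ and $C=-c_0^k\al_1 x<0$ (using $x\ge x_->0$), the product of the two roots equals $C/A<0$, so $p$ has exactly one negative root and one positive root, the latter being $\yt$; moreover, $p$ opens upward, so $p(y)>0$ for every $y>\yt$ while $p<0$ on the open interval between the two roots. Hence it suffices to check that $p$ is already positive at the target value, i.e.
\[ p\big(2^{-k/2}\big)=A\cdot 2^{-k}+B\cdot 2^{-k/2}+C\ >\ 0 , \]
because then $2^{-k/2}$ (which is positive, hence to the right of the negative root) must lie strictly to the right of $\yt$.

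To verify this I would first invoke the preceding claim $c_0\le\tfrac12$, which gives $c_0^k\le 2^{-k}$, hence $A=1-c_0^k\ge\tfrac12$ and $A\cdot 2^{-k}\ge 2^{-k-1}$. Then I would dominate the only negative contribution: in the first case $0<x\le\al_2=\be<0.1$ and $\al_1=1-\be<1$, so $|C|=c_0^k\al_1 x\le 2^{-k}\cdot 1\cdot 0.1$. Combining,
\[ A\cdot 2^{-k}+C\ \ge\ 2^{-k-1}-0.1\cdot 2^{-k}\ =\ 0.4\cdot 2^{-k}\ >\ 0 , \]
and since $B>0$ the term $B\cdot 2^{-k/2}$ is nonnegative, so $p(2^{-k/2})>0$ as required, and in fact $\yt<2^{-k/2}$.

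I do not expect a genuine obstacle here; the only point to be careful about is to resist expanding $\yt$ via the explicit quadratic formula, and to observe that the smallness $\be<0.1$ (established just above) is used precisely to make $|C|$ small compared with $A\cdot 2^{-k}$ — everything else is a one-line sign check. (The same parabola-sign trick should serve in the analogous estimate of the second case $x>\al_2$.)
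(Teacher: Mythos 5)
Your proof is correct, but it takes a genuinely different route from the paper's. The paper works directly with the factored form of \eqref{eq:quadratic}: since $x\le\al_2$, the left side $y(\al_2-x+y)\ge y^2$, while the right side $c_0^k(\al_1-y)(x-y)\le c_0^k\al_1\al_2$; at $y=\yt$ the two sides are equal, so $\yt^2\le c_0^k\al_1\al_2$ and hence $\yt\le c_0^{k/2}\sqrt{\al_1\al_2}\le 2^{-k/2}$. Your argument instead expands to $Ay^2+By+C$ and checks the sign of the parabola at the test point $y=2^{-k/2}$. Both are essentially one-liners, but they have different footprints: the paper's bound uses only $c_0\le 1/2$ (and the trivial $\sqrt{\al_1\al_2}\le 1/2$), whereas yours also needs the smallness $\be<0.1$ to dominate $|C|$ by $A\cdot 2^{-k}$; in exchange you avoid having to reason about which pieces of the factored product are being bounded where. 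Incidentally the paper's route gives the slightly stronger $\yt\le c_0^{k/2}\sqrt{\al_1\al_2}\le 2^{-k/2-1}$ for free, though this extra factor of $1/2$ is never used. Your parenthetical remark that the same test-point trick works in the second case ($x>\al_2$) is also right, since the sign pattern of $A,B,C$ is unchanged there.
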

\begin{proof}
Recall that $\yt$ is the unique positive root of the equation \eqref{eq:quadratic}. The left-hand side is at least $y^2$, while the right-hand side is at most $c_0^k \al_1 \al_2$. It follows that 
\[ \yt \leq c_0^{k/2} \sqrt{\al_1 \al_2} \leq 2^{-k/2} .\]
\end{proof}
\begin{claim*}
If $d \geq 300$ and $\al_2=\be < 0.1$, then for every $x \leq \al_2$:
\[ \eta(0,x) = d \cdot F(x,t_0) + g(\al_2,x)
< - \frac{5r}{e d\sqrt{d}} .\]
\end{claim*}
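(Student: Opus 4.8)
The claim is an estimate for $\eta(0,x) = d\cdot F(x,t_0) + g(\al_2,x)$ where $t_0 = 2r/d$ and $\al_2 = \be = r/(d-r) < 0.1$, over the range $x \le \be$. The plan is to bound $F(x,t_0)$ from above using Proposition~\ref{prop:F_main_term} (whose hypothesis $t_0 \ge 2x/(1+x)$ holds here since $x \le \be$ and the earlier $c_0 \le 1/2$ claim is essentially this inequality) and to bound $g(\al_2,x)$ from above using \eqref{eq:g_bound}, i.e. $g(\al,x) \le h(x) + x\log(e\al)$. Writing $z = x/t_0$, the main term becomes $\tfrac12 x t_0\,\varphi(z) = \tfrac12 x t_0(1-z+\log z)$, and since $d t_0/2 = r$ we get $d\cdot F(x,t_0) \le x r\,\varphi(z)/t_0 \cdot \tfrac{t_0}{1} $… more cleanly, $d\cdot F(x,t_0) \le \tfrac{d}{2} x t_0 \varphi(z) = r x \varphi(z)$. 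Wait — $\tfrac12 d t_0 = r$, so $d\cdot F(x,t_0) \le r x\,\varphi(x/t_0)$. Hmm, that has an extra factor; let me be careful: $d\cdot \tfrac12 x t_0 = x\cdot \tfrac12 d t_0 = xr$, so $d\cdot F(x,t_0)\le xr\,\varphi(x/t_0)$. Since $\varphi(x/t_0) = 1 - x/t_0 + \log(x/t_0)$ and $x/t_0 = xd/(2r) \le \be d/(2r) = (d/(d-r))/2 < 1/2\cdot(1/(1-1/11))$, this is a bounded quantity; the dominant contribution is $xr\log(x/t_0) = xr\log x - xr\log t_0$.

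The second step is to combine this with $g(\be,x)\le h(x)+x\log(e\be) = x\log(1/x) + x + x\log\be$. The key cancellation is between $+xr\log x$ (negative, large) coming from $F$ and $+x\log(1/x) = -x\log x$ coming from $g$: their sum is $x\log x\,(r-1) \le 2x\log x$ since $r \ge 3$, which is strongly negative for $x$ small. The remaining linear-in-$x$ terms are $xr(1 - x/t_0 - \log t_0) + x(1 + \log(e\be)) = x\big(r(1 - x/t_0) - r\log t_0 + 1 + 1 + \log\be\big)$; using $t_0 = 2r/d$, $\log t_0 = \log r - \log d + \log 2$ and $\log\be = \log r - \log(d-r)$, the $\log d$ terms combine with $-r\log t_0 = -r\log r + r\log d - r\log 2$ to give a positive $+r\log d$ balanced against the factor $x\log x \le$ (something involving $\log r$, $\log d$). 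So the strategy is: plug in the worst case $x = \be = r/(d-r)$ for the "linear" part after extracting enough of the $x\log x$ term to dominate, show the result is $\le -5r/(e d\sqrt d)$, and use monotonicity / the $d\ge 300$, $\be < 0.1$ bounds to control the residual constants.

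Concretely I would: (1) apply Prop.~\ref{prop:F_main_term} to get $d\cdot F(x,t_0) \le xr\,\varphi(x/t_0)$; (2) apply \eqref{eq:g_bound} to get $g(\be,x) \le -x\log x + x + x\log\be$; (3) add, use $r\ge 3$ to write $xr\log x - x\log x = (r-1)x\log x \le 2x\log x$ and collect the rest into $x\cdot\Psi(x)$ where $\Psi(x) = r(1 - x/t_0) + r\log r - r\log t_0 \cdot(\text{done}) \ldots$ — i.e. an explicit expression monotone in $x$; (4) bound $\Psi$ on $(0,\be]$ using $\be < 0.1$, $r \le 5.2\log d$, and $d\ge 300$; (5) compare the resulting bound on $\eta(0,x)/x$ against $-5r/(e d\sqrt d\,\cdot x)$, i.e. since $x \le \be = r/(d-r)$, it suffices that $\eta(0,x) \le -5r/(e d\sqrt d)$ — here I would split by cases on $x$ (e.g. $x \ge 1/d^2$ versus $x$ tiny) because for very small $x$ the term $2x\log x \to 0$ but so does the target $-5r/(ed\sqrt d)$ only if compared against $x$; actually since the RHS is a fixed constant (in $x$), the worst case is small $x$, so I expect to need the lower bound $x \ge x_- = \tfrac2e r/(d\sqrt d)$ — but wait, the claim is stated for \emph{every} $x \le \al_2$, with no lower bound, yet RHS is constant; for $x \to 0$, $\eta(0,x) \to 0 > -5r/(ed\sqrt d)$. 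So the claim as literally stated must implicitly mean $x \ge x_-$, or $x$ ranges over densities realized by actual sets so $x \ge 1/N \to 0$ is excluded only asymptotically; **the main obstacle** is pinning down exactly which range of $x$ is intended and verifying the constant $5/e$ survives — I expect the intended range is $x \in [x_-,\al_2]$ (consistent with the role of $x_-$ in \eqref{eq:x_minus_plus} and the surrounding argument), and then the $x = x_-$ endpoint, where $F(x_-,t_0)$ was already shown $\le -\tfrac12 x_- t_0 a\log d$ with $a = 1/2$ in the preceding $x_-$-claim, gives $d\cdot F(x_-,t_0) < -\tfrac12 x_- r\log d = -\tfrac{r}{e}\cdot\frac{r\log d}{d\sqrt d}$, and since $r\log d \ge 5$ for $d \ge 300$ (as $r\ge 3$, $\log d \ge \log 300 > 5.7$, so $r\log d > 17$) one gets $< -5r/(ed\sqrt d)$ after absorbing the (lower-order, positive) $g$-contribution — this endpoint computation, plus monotonicity of the bound in $x$ on $[x_-,\be]$ established via the sign of the derivative (as in the first-case analysis), is the cleanest route. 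The hard part will be confirming monotonicity of the combined upper-bound function on the whole interval $[x_-,\be]$ rather than just at the endpoints, and keeping the numerical constant $5/e$ (rather than some smaller constant) throughout; I would handle this by differentiating $x\mapsto xr\varphi(x/t_0) - x\log x + x(1+\log\be)$ divided through, and checking its sign is controlled by $\log x < 0$ being dominant for $x < \be < 0.1$.
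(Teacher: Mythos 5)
Your plan has a genuine gap: the bound on $g$ from~\eqref{eq:g_bound} is too lossy in exactly the regime where the claim is tight. Writing $\gamma = x/t_0$ and $\be=\al_2$, the bound $g(\be,x)\le h(x)+x\log(e\be)$ evaluates at $x=\be$ (i.e.\ $\gamma=\tfrac{1+\be}{2}\approx\tfrac12$) to $g(\be,\be)\le \be$, whereas the true value is $g(\be,\be)=0$. Meanwhile the Proposition~\ref{prop:F_main_term} bound gives $d\cdot F(\be,t_0)\le r\be\,\varphi\big(\tfrac{1+\be}{2}\big)\approx r\be\,\varphi(0.5)\approx -0.193\,r\,\be$. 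For the critical case $r=3$ this is $\approx -0.58\be$, so your combined upper bound near $x=\be$ is roughly $-0.58\be+\be>0$: positive, hence useless. If you carry the algebra through (the $\log d$ and $\log r$ contributions cancel, as you correctly anticipate), your upper bound reduces to $x\big(r(1-\gamma)+(r-1)\log\gamma+(1-\log 2)+\log\tfrac{d}{d-r}\big)$, and for $r=3$, $\gamma$ near $0.5$--$0.55$ the bracket is positive (about $+0.5$). The error term you discard in~\eqref{eq:g_bound} --- the difference between $x\log(e\be)$ and the true $h(\be-x)-h(\be)$ --- is of order $\be$, which dominates the $d\cdot F$ term and sinks the argument. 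Your proposed fallback, an endpoint computation at $x=x_-$ plus monotonicity of the upper bound on $[x_-,\be]$, cannot rescue this: the right endpoint $x=\be$ already fails, so the bound is not negative on the whole interval.

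The paper's proof avoids this by not linearizing $g$ in $x$. It reparametrizes $g(\be,x)=t_0\big(h(\gamma)+h(\tfrac{1+\be}{2}-\gamma)-h(\tfrac{1+\be}{2})\big)$ (using $h(ab)=ah(b)+bh(a)$ and $\be=\tfrac{1+\be}{2}t_0$), then bounds this by $t_0\big(h(\gamma)+h(0.55-\gamma)-h(0.55)\big)$ via the monotonicity of $z\mapsto h(z-\gamma)-h(z)$. This keeps the factor $h(0.55-\gamma)$, which actually vanishes as $\gamma\to 0.55$, recovering the cancellation you lose. The remaining work splits by $\gamma$: for $\gamma\in[0.05,0.55]$ a numerical check on the tighter expression $t_0\big(r\gamma\varphi(\gamma)+h(\gamma)+h(0.55-\gamma)-h(0.55)\big)$ shows it is $\le -0.056\,t_0$ (for $r=3$, hence all $r\ge 3$); for $\gamma\le 0.05$ the linearized bound $\gamma t_0\big((r-1)(\log\gamma+1)+2+\log 0.55\big)\le -2.5\gamma t_0 = -2.5x$ is enough, which is where your style of estimate would indeed work. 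Finally, you are right that the claim as stated implicitly restricts $x\ge x_-$: the paper's last step uses $-2.5x\le -2.5x_-=-5r/(ed\sqrt d)$, and the other branch $-0.056\,t_0\le -5r/(ed\sqrt d)$ needs $d\ge 300$.
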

\begin{proof}
We choose $\ga$ such that $x = \ga t_0$. Note that $\be= \frac{1+\be}{2} t_0$. Since $x\leq \be$, we have $\ga \leq (1+\be)/2 < 0.55$. 

Using $h(ab)=a \cdot h(b) + b \cdot h(a)$, we get 
\begin{multline*}
g(\be,x) = h(x)+h(\be-x)-h(\be)  
= h(\ga t_0)
+ h\left( \left(\frac{1+\be}{2}-\ga\right)t_0 \right)
- h\left( \frac{1+\be}{2} t_0 \right) \\
= t_0 \bigg( h(\ga)
+h\left(\frac{1+\be}{2}-\ga\right)
-h\left(\frac{1+\be}{2}\right) \bigg) 
\leq t_0 \bigg( h(\ga)
+h\left(0.55-\ga\right)
-h\left(0.55\right) \bigg) ,
\end{multline*}
where the last inequality is true because $h(z-\ga)-h(z)$ is monotone increasing in $z$ and $(1+\be)/2 < 0.55$.

Since $x \leq \be \leq 0.1$, we can apply Proposition~\ref{prop:F_main_term} with $x$ and $t_0=2\be/(1+\be) \geq 2x/(1+x)$. We get the following upper bound, using $t_0=2r/d$:
\begin{multline*}
d \cdot F(x,t_0) + g(\be,x) \leq 
\frac{d}{2}\ga t_0^2 \big( 1- \ga + \log \ga \big) + 
t_0 \bigg( h(\ga)+h(0.55-\ga)-h(0.55) \bigg) \\
= t_0 \bigg( r\ga \big( 1- \ga + \log \ga \big) 
+ h(\ga)+h(0.55-\ga)-h(0.55) \bigg)\\ 
\leq \ga t_0 \bigg( (r-1)\log \ga + r +1+\log(0.55) \bigg) 
= \ga t_0 \bigg( (r-1)\big(\log \ga+1) + 2+\log(0.55) \bigg).
\end{multline*} 
Setting $r=3$, the second line can be checked to be below $-0.056 t_0$ for all $\ga \in [0.05,0.55]$, and hence the same is true for any $r \geq 3$ as well. On the other hand, for $\ga \leq 0.05$, the last bound is less than 
\[ -2.5 \ga t_0 = -2.5 x \leq -2.5 x_- = - \frac{5r}{e d\sqrt{d}} ,\]  
which is precisely the stated bound. It is easy to check that the other bound $-0.056 t_0$ is even better provided that $d \geq 300$.
\end{proof}

Furthermore, according to \eqref{eq:g_bound}, we have 
\[ g(\al_1,\yt) \leq h(\yt) + \yt .\]
Since $x \leq \al_2 \leq 1/e$, we also have 
\[ g(\al_2,x-\yt) - g(\al_2,x) =
\underbrace{h(x-\yt)-h(x)}_{\leq 0} + 
\underbrace{h(\al_2-x+\yt)-h(\al_2-x)}_{\leq h(\yt)} 
\leq h(\yt) .\]
Note that in our bound \eqref{eq:bound_with_yt} the term $(k \log c_0) \yt$ is negative, and hence can be omitted. Therefore, using the previous claim, as well as $r \geq 3$ and $d/3 < k < d/2$, we get  
\[\max \eta_x 
\leq - \frac{5r}{e d\sqrt{d}} + 2h(\yt) + \yt 
\leq - \frac{15}{e d\sqrt{d}} 
+ \big(1+ k \log 2 \big) 2^{-k/2} 
\leq - \frac{15}{e d\sqrt{d}} + \frac{\log 2}{2} d2^{-d/6} ,\]
which is negative for $d \geq 68$, and we are done.

\medskip

\noindent\textbf{Second case:} $x > \al_2$. 

The proof in this case goes along similar lines but our estimates do not need to be so sharp. This time we parameterize the points $(x_1,x_2)$ in $K_x$ slightly differently: for $y \geq 0$ let 
\[ x_1=x-\al_2+y 
\quad \text{and} \quad 
x_2=\al_2-y .\]
Accordingly, there is a shift in $t_y$ as well:
\[ t_y \defeq 
\underbrace{\frac{2r}{d}+\frac{2k(x-\al_2)}{dx}}_{t_0} + \frac{2k y}{dx} .\]
In this case we define $\eta_x$ as
\[ \eta_x(y) \defeq \eta(x-\al_2+y,\al_2-y) = 
d \cdot F(x,t_y )+
\underbrace{g(\al_1,x-\al_2+y)}_{=g(\al_1,1-x-y)}+
\underbrace{g(\al_2,\al_2-y)}_{=g(\al_2,y)} .\]
It is easy to see that we still have $1 \geq t_y \geq t_0 \geq x$, and hence the following is valid in this case as well:
\[ \partial_y \big( d \cdot F(x,t_y) \big) 
\leq k \log c_0 
\text{, where} \quad 
c_0 \defeq 1-\frac{t_0-x}{t_0 \big( 1-(2-t_0)x \big)} < 1 .\]
Consequently, 
\[ \eta_x(y) \leq d \cdot F(x,t_0) + 
(k \log c_0 ) y + g(\al_1,1-x-y) + g(\al_2,y) .\]
The derivative of the right-hand side w.r.t.~$y$ is 
\[ k \log c_0 
+ \log \frac{(1-x-y)(\al_2-y)}{y(x-\al_2+y)} .\]
So the maximum is attained at the unique positive solution $\yt$ of the quadratic equation 
\[ y(x-\al_2+y) = c_0^k (1-x-y)(\al_2-y) .\]
We arrive at the following bound:
\begin{equation} \label{eq:bound_case2}
\max_{y} \eta_x(y) \leq d \cdot F(x,t_0) + 
\underbrace{(k \log c_0 )}_{<0} \yt + g(\al_1,1-x-\yt) + g(\al_2,\yt) .
\end{equation}
As in the first case, one can explicitly express $c_0$ and $\yt$ in terms of $x$. We checked with a computer that the resulting upper bound is negative for all $d,k$ in the range $d \leq 500$.

Now we turn to the range $d>500$. As we saw in the first case, we may also assume that $\al_2=\be<0.1$. 

First of all, $\yt \leq c_0^{k/2}$ still holds by the very same argument as in the first case. We claim that we still have $c_0 \leq 1/2$, too. As we have seen, this is equivalent to \eqref{eq:c0_cond}, which we checked to be true at $x=\al_2$. Taking a larger $x$, $t_0$ gets larger, and hence the right-hand side of \eqref{eq:c0_cond} gets smaller, while the left-hand side becomes larger according to the next claim.
\begin{claim*}
At $x=\al_2$ we have 
\[ \frac{t_0}{x} = \frac{2}{1+\be} = \frac{4k}{d} ,\]
while for $\ds \al_2< x < \sqrt{ \frac{r}{2d}}$ we have 
\[ \frac{t_0}{x} > \frac{2\be}{1+\be} .\]
%
%
%
\end{claim*}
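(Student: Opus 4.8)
The plan is to verify the two assertions of the claim by direct computation, using only the elementary identities available in this regime: $d = 2k + r$ (since $s = 1$ here), $\al_2 = \be = r/(2k)$, and the standing inequality $r < 2k$. Recall that in the second case $t_0$ denotes the function $t_0(x) = \frac{2r}{d} + \frac{2k(x - \al_2)}{dx}$.

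For the equality at $x = \al_2$, I would simply substitute: the second summand of $t_0(\al_2)$ vanishes, so $t_0(\al_2) = 2r/d$, and dividing by $\al_2 = r/(2k)$ gives $t_0(\al_2)/\al_2 = 4k/d$; since $d = 2k + r$ one has $4k/d = 2/(1 + r/(2k)) = 2/(1+\be)$, as claimed.

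For the inequality on $(\al_2, \sqrt{r/(2d)})$, the first step is to rewrite $t_0(x)/x$ as a single rational function. Using $2k\al_2 = r$ and $2r + 2k = d + r$ one gets $t_0(x) = \big((d+r)x - r\big)/(dx)$, hence $t_0(x)/x = \big((d+r)x - r\big)/(dx^2)$. Since $\frac{2\be}{1+\be} = \frac{2r}{d}$, multiplying through by the positive quantity $dx^2$ shows that the desired inequality is equivalent to $p(x) < 0$, where $p(x) \defeq 2rx^2 - (d+r)x + r$. As $p$ is an upward-opening parabola, $\{p < 0\}$ is an interval, so it suffices to check $p < 0$ at the two endpoints $x = \al_2$ and $x = \sqrt{r/(2d)}$ and then invoke convexity to conclude $p < 0$ on the whole segment between them (hence on the open interval in question, which is anyway nonempty in the relevant range $\be < 0.1$). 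At $x = \al_2$, substituting $\al_2 = r/(2k)$ and simplifying with $d + r = 2k + 2r$ gives $p(\al_2) = r^2(r - 2k)/(2k^2)$, which is negative because $r < 2k$. At $x = \sqrt{r/(2d)}$, writing $u^2 = r/(2d)$ collapses $p(u)$ to $(d+r)\big(\frac{r}{d} - \sqrt{\frac{r}{2d}}\big)$, and this is negative precisely because $\frac{r}{d} < \sqrt{\frac{r}{2d}}$, i.e.\ $\frac{r}{d} < \frac12$, i.e.\ $d > 2r$ --- which holds since $d = 2k + r > 2r$.

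There is no genuine obstacle here; the argument is elementary algebra throughout. The only place that demands a little care is the bookkeeping that reduces $t_0(x)/x > \frac{2\be}{1+\be}$ to the quadratic inequality $p(x) < 0$, together with the observation that the convexity of $p$ upgrades the two endpoint evaluations to the entire interval at no extra cost.
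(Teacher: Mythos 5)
Your computation is algebraically correct, but you have proved the inequality exactly as it is misstated in the paper: the bound $\frac{t_0}{x}>\frac{2\be}{1+\be}$ is a typo, and what the paper's own proof derives (and what is used immediately afterward) is the \emph{stronger} bound $\frac{t_0}{x}>\frac{2}{1+\be}=\frac{4k}{d}$. Indeed, right after the claim the paper writes ``$\frac{t_0}{x}\geq\frac{2}{1+\be}$, and hence $\frac{x}{t_0}\leq\frac{1+\be}{2}\leq 0.55$''; with only your bound one would get $\frac{x}{t_0}\leq\frac{1+\be}{2\be}$, which is large (since $\be<0.1$ in this regime) and useless for the subsequent application of Proposition~\ref{prop:F_main_term}. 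Notice also that the stated equality at $x=\al_2$ already gives $\frac{2}{1+\be}$, so the intended inequality should match this threshold, with equality attained precisely at the left endpoint. Since $\frac{2\be}{1+\be}<\frac{2}{1+\be}$, your conclusion is strictly weaker than what is needed, and your proof is therefore not sufficient as written.

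Your route is nonetheless genuinely different from the paper's and can be salvaged. The paper observes that $t_0'(x)=\frac{2k\al_2}{dx^2}>2$ whenever $x<\sqrt{r/(2d)}$, while at $x=\al_2$ we have $t_0/x=\frac{2}{1+\be}<2$; hence the function $t_0(x)-\frac{2}{1+\be}x$ vanishes at $\al_2$ and has positive derivative on the interval, giving the bound directly, with no restriction beyond $0<\be<1$. Your rational-function reduction works too once the target is corrected: replace $p(x)=2rx^2-(d+r)x+r$ by $q(x)=4kx^2-(d+r)x+r$ (corresponding to $\frac{4k}{d}$). Then $q(\al_2)=0$, and one computes that $q\bigl(\sqrt{r/(2d)}\bigr)<0$ is equivalent to $2r(2d-r)^2<d(d+r)^2$, i.e.\ (setting $z=d/r$) to $(z-2)(z^2-4z+1)>0$, which holds for $z>2+\sqrt{3}$. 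In the regime where the claim is applied one has $\be<0.1$, hence $z=1/\be+1>11$, so the endpoint check passes; by convexity $q<0$ on the open interval, recovering the correct strict inequality. Note that, unlike the paper's derivative argument, the quadratic route requires $\be$ to be small (specifically $\be<(\sqrt3-1)/2\approx 0.366$), but this is harmless in context.
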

\begin{proof}
Note that $t_0/x$ at $x=\al_2$ is equal to 
\[ \frac{2r}{d} \bigg/ \frac{r}{2k} = \frac{4k}{d} 
= \frac{2}{1+\be} < 2.\] 
The derivative of $t_0$ w.r.t.~$x$ is 
\[ \frac{2k \al_2}{dx^2} > 2 \text{ because }
x < \sqrt{\frac{k \al_2}{d}} = \sqrt{\frac{r}{2d}} .\]
It follows that 
\[ \frac{t_0}{x} > \frac{2}{1+\be} 
\text{ for any } \al_2< x < \sqrt{ \frac{r}{2d}} .\]
%
%
%
\end{proof}

Therefore, if $x \leq 0.2$, then we can apply Proposition~\ref{prop:F_main_term} to get  
\[ d \cdot F(x,t_0) \leq 
\frac{d}{2} x t_0 \varphi\left( \frac{x}{t_0} \right) .\]
Note that 
\[ \frac{d}{2} x t_0 = rx + k(x-\al_2) .\]
Furthermore, by the previous claim  
\[ \frac{t_0}{x} \geq \frac{2}{1+\be} 
\text{, and hence } 
\frac{x}{t_0} \leq \frac{1+\be}{2} \leq 0.55 .\]
Using the monotonicity of $\varphi$ we conclude that 
\[ d \cdot F(x,t_0) \leq 
\frac{d}{2} x t_0 \, \varphi\left( \frac{x}{t_0} \right) 
\leq \underbrace{\varphi(0.55)}_{\leq - 0.14} 
\big( rx + k(x-\al_2) \big) .\]

On the first hand, due to \eqref{eq:g_bound} and $\al_2 \leq 0.1 < 1/e$, we have
\[ g(\al_2,\yt) \leq h(\yt) 
\leq \frac{k}{2} (\log 2) 2^{-k/2}.\]
On the other hand, 
\begin{multline*}
g(\al_1,1-x-\yt) = h(x-\al_2+\yt) + h(1-x-\yt) - h(\al_1) \\
\leq h(x-\al_2)+h(\yt) + h(1-x) + \yt - h(1-\al_2)
\leq h(x-\al_2)+(x-\al_2)+h(\yt)+\yt .
\end{multline*} 
Using \eqref{eq:bound_case2} and the bounds above, we conclude that 
\[ \max \eta_x \leq - 0.14 \big( rx + k(x-\al_2) \big) 
+ h(x-\al_2)+(x-\al_2)+2h(\yt)+\yt .\]
Note that 
\[ - 0.14 k(x-\al_2) + h(x-\al_2)+(x-\al_2) \leq 0 
\text{ if } x-\al_2 \geq \exp\big( 1- 0.14 k \big) .\]
Otherwise 
\[ h(x-\al_2) + (x-\al_2) 
< 0.14 k \exp\big( 1- 0.14 k \big) .\]
So in both cases we have the following upper bound:
\[ - 0.14 k(x-\al_2) + h(x-\al_2)+(x-\al_2) 
\leq 0.14 k \exp\big( 1- 0.14 k \big) .\]
Furthermore, 
\[ - 0.14 rx 
\leq - 0.14 r \al_2 
\leq -0.14 \frac{r^2}{2k} 
\leq -0.14 \frac{3^2}{2k} 
= - 0.63 \frac{1}{k} .\]
So it suffices to prove that 
\[ -0.63 \frac{1}{k} + \big(1+ k \log 2 \big) 2^{-k/2} + 0.14 k \exp\big( 1- 0.14 k \big) < 0 ,\]
which holds for $k \geq 54$. (Recall that we work under the assumptions $k>d/3$ and $d>500$.)

The only further assumptions we used in the second case are that $x < \sqrt{r/(2d)}$ and $x<0.2$. Consequently, it remains to show that $x_+=\min\big(0.2, \sqrt{r/(2d)}\big)$ satisfies \eqref{eq:x_minus_plus}. Note that  
\[ \sqrt{\frac{r}{2d}} \leq \sqrt{\frac{5.2 \log d}{2d}} 
< 0.2 \text{ for } d \geq 388 .\]
So it suffices to check that $x_+= \eps \defeq \sqrt{r/(2d)}<0.2$ is a good choice, i.e., we need that 
\[ F_d(1-x_+,\underbrace{2k/d}_{=1-r/d}) 
= d \cdot F(1-\eps,1-2\eps^2) + H(\eps) < 0 .\]
Since $d = r/(2 \eps^2) \geq 3/(2 \eps^2)$, this follows from 
\[ \frac{3}{2 \eps^2} \cdot F(1-\eps,1-2\eps^2) + H(\eps) < 0 ,\]
which can be checked to hold for $0<\eps<0.3$.

\bibliographystyle{plain}
\bibliography{refs}

\end{document}